\documentclass[preprint, final, 12pt]{elsarticle}
\newcommand{\eval}[2][\right]{\relax
  \ifx#1\right\relax \left.\fi#2#1\rvert}

\usepackage{graphicx}
\usepackage{amssymb}
\usepackage{amsmath}
\usepackage{graphicx}
\usepackage{subfig}
\usepackage{algorithmic}
\usepackage{algorithm}
\usepackage{color}
\usepackage{booktabs, makecell, multirow, tabularx}
\usepackage[framed]{ntheorem}
\usepackage{wasysym}
\usepackage{lineno,hyperref}
\usepackage{setspace}
\usepackage{txfonts}
\theoremstyle{plain}

\theoremstyle{plain}
\newtheorem{theorem}{Theorem}
\newtheorem{lemma}{Lemma}

\theoremstyle{remark} 
\newtheorem{proof}{Proof}
\journal{XXXX XXXXXX}

\begin{document}
\begin{frontmatter}
\title{Three-term Recurrence Relations with Arbitrary Degree Steps for Orthogonal Polynomials}
\author{Bo Yang\corref{cor1}} 
\ead{bo.yang@hotmail.fr}
\cortext[cor1]{Corresponding author}

%% Author affiliation
\affiliation{organization={School of Automation, Northwestern Polytechnical University}, %Department and Organization
	addressline={127 Youyi road}, 
	city={Xi'an},
	postcode={710072}, 
	state={Shaanxi},
	country={P.R. China}}
	
\begin{abstract}
An approach to generate three-term recurrence relations with arbitrary degree steps is proposed for orthogonal polynomials. Specifically, given any class of orthogonal polynomials $\{Q_{p}(x)\}_{p=0}^{\infty}$ defined by Favard's theorem, we employ the adjacent members $Q_{p}(x)$ and $Q_{p-1}(x)$ to compute $Q_{p+s}(x)$ of high degree and the one of low degree $Q_{p-t}(x)$, where $(s,t)$ are parameters for degree step adjustment.The coefficients of both relations are analyzed, revealing novel properties that enable the derivation of three-term recurrence relations with respect to $Q_{p+s}(x)$, $Q_{p}(x)$ and $Q_{p-t}(x)$ by eliminating $Q_{p-1}(x)$. Furthermore, in addition to the standard recursive formula, which is characterized by degree increase, the formulas for degree decrease and end-to-middle directions are also formulated. Moreover, explicit recurrence relations with 2-degree steps are presented for Hermite, Gegenbauer and Legendre polynomials. The computation precision of the proposed recurrence relations is also compared with that of the standard ones.
\end{abstract}
\begin{keyword}
Favard's theorem, Orthogonal polynomials, Three-term recurrence relation, Recurrence relation with arbitrary degree steps
\end{keyword}
% MSC2020 classification: 33C45, 33D45, 42C05
\end{frontmatter}
% \maketitle
% \tableofcontents

\section{Introduction}
\subsection{Recurrence relation for orthogonal polynomials}
% Orthogonal polynomials are widely applied to numerical analysis and scientific computing. The literature indicates that the research on orthogonal polynomials is primarily focused on these aspects, such as definition domain \cite{marcellan2001}, \cite{castillo2014}, zeros distribution \cite{beardon2011}, inner product type \cite{marcellan2014},  \cite{garza2021}, measure/weight \cite{cvetkovic2018}, \cite{liu2021}, \cite{gautschi2022}, \cite{milovanovic2023}, multiple orthogonal polynomials\cite{filipuk2015}, \cite{swiderski2022}, exceptional orthogonal polynomials \cite{duran2015} and semi-classical orthogonal polynomials \cite{dzhamay2022}.
Recurrence relations play central and multifaceted roles in the theory of orthogonal polynomials. They not only facilitate efficient computation as they directly show, but also support the definitions and determine the intrinsic properties of the polynomials. So far, most orthogonal polynomials, especially those which are hierarchically classified by the Askey scheme, satisfy the following three-term recurrence relation \cite{chihara1978}
\begin{equation}
	\label{3term}
	\begin{aligned}
	xQ_{p}(x)=a_{p}Q_{p+1}(x)&+b_{p}Q_{p}(x)+c_{p}Q_{p-1}(x),\; p=0,1,2,\cdots	                                                      
	\end{aligned}
\end{equation}
with the initial conditions $Q_{-1}(x)=0$ and $Q_{0}(x)=\mathit{const.}\neq 0$, where $\{a_{p}\}_{p\geq 0}$, $\{b_{p}\}_{p\geq 0}$ and $\{c_{p}\}_{p\geq 0}$ are sequences of complex numbers such that $a_{p}c_{p+1}\neq 0$ for all $p=0,1,2,\cdots$.

% unit circile orthogonal polynomials
Recurrence relations are closely intertwined with diverse research directions. As in the case of orthogonal polynomials on the real line, those defined on the unit circle possess a rich and systematic framework in which recurrence relations, though structurally different from their real‑line counterparts, serve as powerful tools for characterizing core properties \cite{simon2005p1, simon2005p2}. Several studies have sought to bridge the two settings by revealing connections or even equivalences between them. For instance, three‑term recurrence relations have been derived for orthonormal Laurent polynomials on the unit circle, and these relations, together with the associated Christoffel-Darboux formula, lead to a Favard‑type theorem \cite{barroso2005}. Such a theorem directly establishes the existence of a unique nontrivial probability measure on the unit circle from the given three‑term recurrence relation \cite{castillo2014}. Furthermore, it has been shown that a complete correspondence exists between the theory of orthogonal polynomials on the unit circle (both real and complex cases) and the polynomial systems defined by general three‑term recurrences whose coefficients involve chain sequences \cite{costa2013}. Moreover, the Riemann-Hilbert analysis has been effectively employed to construct orthogonal polynomials on the unit circle with H\"{o}lder‑type weights, enabling a transparent derivation of the Verblunsky coefficients and their essential properties \cite{branquinho2026}.

% Sobolev-type orthogonal polynomials
Orthogonal polynomials with respect to non‑classical measures typically give rise to higher‑order recurrence relations \cite{hermoso2023}. A prominent example is Sobolev‑type orthogonal polynomial, whose inner product involves derivatives \cite{marcellan2015}. Early attempts to extend Favard’s theorem to this setting have shown that such polynomials satisfy high‑order recurrence relations \cite{marcellan2001}. In the case of discrete Sobolev‑type inner products, the recurrence coefficients can be obtained via connection coefficients relating the sequences orthogonal to the Sobolev‑type inner product to those orthogonal to the underlying measure \cite{marcellan2014}. For instance, Huertas {\it et} al. derived connection formulas between monic orthogonal polynomials with respect to the Freud-Sobolev inner product and the classical Freud polynomials, and consequently obtained a five‑term recurrence relation \cite{huertas2019}. Additionally, symmetry properties of the weight function can facilitate the derivation of recurrence formulas; e.g., Garza {\it et} al. established both a five‑term and a three‑term recurrence relations with rational coefficients for a class of Sobolev‑type orthogonal polynomials \cite{garza2021}.

% multiple orthgonal polynomials
Multiple orthogonal polynomials are polynomials in one variable that satisfy orthogonality conditions with respect to more than one positive measure  \cite{assche2001}. The exploration of recurrence relations for such orthogonal polynomials becomes a significant aspect. Filipuk {\it et} al. developed a method to obtain the nearest‑neighbor recurrence coefficients from the step‑line recurrence coefficients, and also proposed the way to compute the step‑line recurrence coefficients from the coefficients determined by each measure of multiple orthogonality  \cite{filipuk2015}. Barroso {\it et} al. derived recurrence relations for multiple orthogonal polynomials on the unit circle merely based on orthogonality conditions, resulting in an appropriate definition of multiple Verblunsky coefficients and a multiple version of the Szeg\"{o} recurrence relation \cite{barroso2015}. The recent work on this theme showed that the Szeg\"{o} recurrence relation is in analogy with the nearest neighbour recurrence relations from the real line counterpart \cite{vaktnas2024}. A four‑term recurrence relation led to the analysis of asymptotic properties for the special cases, i.e., multiple orthogonal Hermite polynomials \cite{aptekarev2022}.
 
Recurrence relations serve as both effective analytical tools and fundamental structural properties in the study of  $d$-orthogonal polynomials, a class closely related to multiple orthogonal polynomials. Douak and Maroni obtained a new family of 2‑orthogonal polynomials as a particular solution to the nonlinear system governing the recurrence coefficients of classical 2‑orthogonal polynomials \cite{douak2021}. In a subsequent paper, they addressed the integral representation problem for the associated linear functionals \cite{douak2022}. What's more, for $d$-orthogonal polynomials associated with special functions, their recurrence relations can often be explicitly determined. For example, Chaggara and Gahami showed that polynomials defined by a Brenke‑type generating function and satisfying a standard four‑term recurrence relation are definitely sequences of 2‑orthogonal polynomials \cite{chaggara2025}.

% exceptional orthogonal polynomials
Introduced in the last two decades, exceptional orthogonal polynomials have been an active area for orthogonal polynomials \cite{ullate2009, ullate2012}.
Recurrence relation gives the obvious facilitation in studying such orthogonal polynomials and the more general frame, multi-indexed orthogonal polynomials. Dur\'{a}n derived higher‑order recurrence relations for exceptional Charlier, Hermite, Meixner, and Laguerre polynomials. The author also showed that the order of the obtained recurrence relation is minimal \cite{duran2015}. Odake proved that the solvable quantum mechanical systems, whose eigenfunctions are described by the multi-indexed orthogonal polynomials of Laguerre, Jacobi, Wilson and Askey-Wilson types, satisfy the generalized closure relations \cite{odake2016}.  Recurrence relation can bridge the exceptional and multi-indexed orthogonal polynomials in particular conditions, e.g., Horv\'{a}th defined multi‑index polynomials of the second kind and further explored some properties of such polynomials according to the recurrence relations of exceptional orthogonal polynomials \cite{horvath2019}.

Besides the research directions mentioned above, recurrence relations are fundamental tools for the study of orthogonal matrix polynomials \cite{duran2005}, as reflected in aspects such as the zeros of polynomials \cite{duran1999}, the specific properties \cite{rivero2020}, the asymptotic behavior of recurrence coefficients \cite{duran2001}, and the newly introduced “finite” orthogonal matrix polynomials \cite{lekesiz2025}. Moreover, recurrence relations are also employed in the analysis of coefficient perturbations. For further details, one can refer to \cite{ismail2019, shukla2023a, shukla2023b, kim2024}.
% perturbation recurrence relation
%The study of coefficient perturbations in recurrence relations is also an interesting direction. $R_{I}$ and $R_{II}$ are typical recurrence relations within this branch, and co‑recursive, co‑dilated, and co‑modified polynomials also belong to this scope \cite{kim2024}. Ismail and Ranga discussed a special $R_{II}$‑type recurrence relation that always leads to a positive measure on the unit circle \cite{ismail2019}. Shukla and Swaminathan analyzed perturbations of the $R_{II}$‑type recurrence relation using a transfer matrix method, and the effect of perturbations on the unit circle was also discussed \cite{shukla2023a}. The same authors studied the $R_{I}$-type recurrence relation when the coefficients are modified, investigating the structural relation between the original and perturbed polynomials from spectral properties \cite{shukla2023b}.

In this paper, we focus on the classical three‑term recurrence relation from Favard's theorem and extend it to a more generalized form. To be more precise, in order to overcome the limitation of computing the polynomials one by one, we propose novel three-term recurrence relations to directly compute the target polynomial from any two members of the sequence. In addition, the novel recurrence relations are applicable to all classes of orthogonal polynomials defined by Favard's theorem. The main result is summarized in Subsection \ref{sub_mainres} below.

\subsection{Main result}
\label{sub_mainres}
For a sequence of orthogonal polynomials $\{Q_{p}(x)\}_{p=0}^{\infty}$ defined by Favard's theorem, we propose an approach to develop a three-term recurrence relation with arbitrary degree steps, which is formulated by the equation below
\begin{equation}
	\label{mainres}
	Q_{p+s}(x)=M(x)Q_{p}(x)+N(x)Q_{p-t}(x),\;p\geq t\; \mathrm{and }\;(s,t)\in \mathbb{N^{+}}
\end{equation}
where $(s, t)$ are degree steps respectively higher and lower than current degree $p$. $M(x)$ denotes the coefficient for $Q_{p}(x)$; while $N(x)$ represents the one for $Q_{p-t}(x)$. Both are functions with respect to the variable $x$ and the intrinsic parameters. 

Eq. (\ref{mainres}) demonstrates that, given any two members in $\{Q_{p}(x)\}_{p=0}^{\infty}$, we can directly compute any other member of the sequence. Furthermore, it also implies that any polynomial in the sequence can be expressed in terms of two members, \textit{almost} independent of the others.

\subsection{Organization}
Section \ref{sec_preknow} presents some preliminary knowledge with respect to Favard's theorem and classical orthogonal polynomials. Section \ref{sec_arbrecurr} gives the detailed development and analysis of the three-term recurrence relation of arbitrary degree steps. As the extensions of recurrence relation proposed in Section \ref{sec_arbrecurr}, the ones for different computational directions are proposed in Section \ref{sec_direction}. In addition, the explicit three-term recurrence relations with 2-degree steps for classical orthogonal polynomials, such as Hermite, Gegenbauer and Legendre polynomials, are formulated in Section \ref{sec_inst2}. Finally, Section \ref{sec_exp} provides the experiments to demonstrate the computation precision of the proposed recurrence relations in comparison with the standard ones.

\section{Preliminary knowledge}
\label{sec_preknow}
\subsection{Favard's theorem}
\label{sec2}
It is necessary to revisit Favard's theorem first, because it serves as the preliminary knowledge of this paper. Favard's theorem gives 
the general definitions for orthogonal polynomials by means of recurrence relation. It is formulated below \cite{abramowitz1965}, \cite{schoutens2000}.
\begin{theorem}[Favard's Theorem]
\label{thm1}
Let $A_{p}$, $B_{p}$ and $C_{p}$ be arbitrary sequences of real numbers, and let \{$Q_{p}(x)$\} be defined by the recurrence relation:
\begin{equation}
	\label{Favard}
	Q_{p+1}(x)=\left(A_{p}x+B_{p}\right)Q_{p}(x)-C_{p}Q_{p-1}(x), \quad p \geq 0
\end{equation}
together with $Q_{0}(x)=1$ and $Q_{-1}(x)=0$. Then \{$Q_{p}(x)$\} is a system of orthogonal polynomials if and only if $A_{p}\neq 0$, $C_{p}\neq 0$, and $C_{p}A_{p}A_{p-1}>0$ for all $p$.
\end{theorem}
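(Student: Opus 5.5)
The plan is to prove the two directions of Favard's theorem separately, working throughout with a linear functional $\mathcal{L}$ on polynomials. Here "system of orthogonal polynomials" means orthogonality with respect to a positive-definite moment functional, i.e. a positive measure on $\mathbb{R}$. First I note that when $A_p\neq 0$ for all $p$, the recurrence (\ref{Favard}) with $Q_0=1$, $Q_{-1}=0$ yields $\deg Q_p=p$ with leading coefficient $A_0A_1\cdots A_{p-1}\neq 0$. Hence $\{Q_p\}$ is a basis of the polynomial space. This lets me define $\mathcal{L}$ uniquely by $\mathcal{L}[Q_0]=1$ and $\mathcal{L}[Q_p]=0$ for $p\geq 1$.

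\textbf{Sufficiency.} I would show by induction on $p$ that $\mathcal{L}[x^kQ_p]=0$ for $0\le k<p$. Rewriting (\ref{Favard}) gives
\begin{equation*}
xQ_p=\frac{1}{A_p}Q_{p+1}-\frac{B_p}{A_p}Q_p+\frac{C_p}{A_p}Q_{p-1}.
\end{equation*}
Iterating this, $x^kQ_p$ is a combination of $Q_{p-k},\dots,Q_{p+k}$, and the induction on $k$ closes. Next I compute the norms. From the same identity,
\begin{equation*}
\mathcal{L}[xQ_pQ_{p-1}]=\frac{C_p}{A_p}\mathcal{L}[Q_{p-1}^2],
\qquad
\mathcal{L}[xQ_{p-1}Q_p]=\frac{1}{A_{p-1}}\mathcal{L}[Q_p^2].
\end{equation*}
Equating the two expressions gives
\begin{equation*}
h_p=\frac{C_pA_{p-1}}{A_p}\,h_{p-1},\qquad h_p:=\mathcal{L}[Q_p^2],
\end{equation*}
and therefore $h_p=\prod_{j=1}^{p}\frac{C_jA_{j-1}}{A_j}$. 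Since $\frac{C_jA_{j-1}}{A_j}=\frac{C_jA_jA_{j-1}}{A_j^2}$, the hypothesis $C_jA_jA_{j-1}>0$ gives $h_p>0$ for all $p$. So $\mathcal{L}$ is quasi-definite, and in fact positive-definite: any polynomial $\pi$ with $\pi\geq 0$ on $\mathbb{R}$ is a sum of squares $\pi=f^2+g^2$, and expanding $f,g$ in the basis $\{Q_p\}$ gives $\mathcal{L}[\pi]\geq 0$. The Hamburger moment theorem then produces a positive measure representing $\mathcal{L}$.

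\textbf{Necessity.} Suppose $\{Q_p\}$ is orthogonal with respect to a positive-definite $\mathcal{L}$ and satisfies (\ref{Favard}).
\begin{itemize}
\item Orthogonal polynomials must have exact degree $p$, and comparing leading coefficients forces $A_p\neq 0$.
\item Taking $\mathcal{L}[\,\cdot\,Q_{p-1}]$ of (\ref{Favard}) gives $C_p\mathcal{L}[Q_{p-1}^2]=A_p\mathcal{L}[xQ_pQ_{p-1}]=\frac{A_p}{A_{p-1}}\mathcal{L}[Q_p^2]$.
\item Positivity of $\mathcal{L}[Q_{p-1}^2]$ and $\mathcal{L}[Q_p^2]$ then gives $C_pA_{p-1}/A_p>0$, which is equivalent to $C_pA_pA_{p-1}>0$. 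In particular $C_p\neq 0$.
\end{itemize}

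\textbf{Main obstacle.} I expect the hardest step to be passing from the functional to an actual measure, i.e. invoking Hamburger's theorem via positivity on nonnegative polynomials. I would cite it rather than reprove it. For $p=0$, the convention $Q_{-1}=0$ makes $C_0$ irrelevant, so the condition is only required for $p\geq 1$.
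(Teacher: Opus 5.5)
The paper gives no proof of this statement. Favard's theorem is quoted from the literature (Abramowitz--Stegun, Schoutens) as background, and later invoked essentially only for facts such as $C_p\neq 0$. There is therefore no in-paper argument to compare against.

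What you have written is the standard textbook proof, as in Chihara, and it is correct:
\begin{itemize}
\item The functional with $\mathcal{L}[Q_0]=1$ and $\mathcal{L}[Q_p]=0$ for $p\ge 1$ is well defined, because $A_p\neq 0$ makes $\{Q_p\}$ a basis.
\item Since $x^kQ_p\in\mathrm{span}\{Q_{p-k},\dots,Q_{p+k}\}$ with $p-k\ge 1$, you get orthogonality.
\item The two evaluations of $\mathcal{L}[xQ_pQ_{p-1}]$ give $h_p=\frac{C_pA_{p-1}}{A_p}h_{p-1}$ with $h_0=1$.
\item In the converse, the same identity read backwards gives $C_pA_pA_{p-1}=A_p^2h_p/h_{p-1}>0$.
\end{itemize}
Your presentation also makes the role of the sign condition transparent. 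With only $A_p\neq 0$ and $C_p\neq 0$ you still get a quasi-definite functional (all $h_p\neq 0$, possibly negative). The condition $C_pA_pA_{p-1}>0$ is exactly what upgrades this to positive-definiteness, and hence, via Hamburger, to a positive measure.

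Two minor remarks.

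First, your comment that the conditions can only be required for $p\ge 1$ is a genuine correction to the statement as written. Since $Q_{-1}=0$, $C_0$ never enters the polynomials, so the ``only if'' direction cannot force $C_0\neq 0$; and $A_{-1}$ is undefined. The paper later asserts $C_p\neq 0$ for $p\ge 0$, but its Lemmas 1 and 2 only use indices $\ge 1$, so nothing breaks.

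Second, in the necessity direction you take $\deg Q_p=p$ as part of the definition of an orthogonal system. If you prefer not to assume it, it follows anyway. Let $p$ be the least index with $A_p=0$. Then $Q_{p+1}=B_pQ_p-C_pQ_{p-1}$ lies in $\mathrm{span}\{Q_0,\dots,Q_p\}$ while being orthogonal to each of these, so $\mathcal{L}[Q_{p+1}^2]=0$. Positive-definiteness then forces $Q_{p+1}\equiv 0$, and a zero polynomial cannot belong to an orthogonal system.
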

For convenience, we refer to $(A_{p}, B_{p}, C_{p})$ as the Favard's coefficients in the remainder of this paper. 

\subsection{Orthogonal polynomials}
According to Favard's Theorem, a variety of orthogonal polynomials can be generated from specific Favard's coefficients and initial conditions. Moreover, the orthogonal polynomial $Q_{p}(x)$ satisfies the orthogonality as follows:
\begin{equation}
	\label{orth}
	\int_{\Omega}Q_{p}(x)Q_{q}(x)w(x)dx=h_{p}\delta_{pq},
\end{equation}
where $\Omega$ denotes the orthogonality interval, $w(x)$ represents the weight function with $w(x)\geq 0$. $h_{p}$ is the normalization coefficient determined by
\begin{equation}
	\label{hp}
	h_{p}=\int_{\Omega}w(x)Q_{p}^{2}(x)dx,
\end{equation}
and Kronecker delta $\delta_{pq}$ has the explicit expression as
\begin{equation}
	\label{kron}
	\delta_{pq}=\left\{
	\begin{aligned}
	 1,\quad \mathrm{if}\; p=q\\
	 0,\quad \mathrm{if}\; p\neq q\\
	\end{aligned}\right..
\end{equation}

We take Hermite, Gegenbauer and Legendre polynomials as examples for further analysis and discussion in this paper. The Hermite polynomial of degree $p$ is defined by
\begin{equation}
	\label{hermite}
	H_{p}(x)=(-1)^{p}e^{x^{2}}\frac{d^{p}}{dx^{p}}e^{-x^{2}}.
\end{equation}
Hermite polynomials are orthogonal over the interval $(-\infty,\infty)$. The orthogonality is formulated as
\begin{equation}
	\label{herorth}
	\int_{-\infty}^{\infty}H_{p}(x)H_{q}(x)e^{-x^{2}}dx=2^{p}p!\sqrt{\pi}\delta_{pq}.
\end{equation}
The three-term recursive formula for Hermite polynomials is 
\begin{equation}
	\label{herrecur}
	H_{p+1}(x)=2xH_{p}(x)-2pH_{p-1}(x),\quad p\geq 1
\end{equation}
with $H_{0}(x)=1$ and $H_{1}(x)=2x$.

Since Gegenbauer polynomials have a parameter $\lambda$, the polynomials have two different forms according to the value of $\lambda$. One is corresponding to $\lambda\neq 0$, which is the most common form; the other is corresponding to $\lambda=0$. We only discuss the former case, i.e., $\lambda\neq 0$ in this paper. The explicit expression for the $p^{\mathrm{th}}$ degree Gegenbauer polynomial is
\begin{equation}
	\label{gegen}
	G_{p}^{(\lambda)}(x)=\frac{1}{\Gamma(\lambda)}\sum_{q=0}^{[p/2]}(-1)^{q}\frac{\Gamma(p-q+\lambda)}{q!(p-2q)!}(2x)^{p-2q},\;\lambda>-\frac{1}{2},
\end{equation}
where $\Gamma(\cdot)$ denotes the Gamma function. Gegenbauer polynomials are orthogonal over the interval $(-1,1)$:
\begin{equation}
	\label{gegorth}
	\int_{-1}^{1}G_{p}^{(\lambda)}(x)G_{q}^{(\lambda)}(x)(1-x^{2})^{\lambda-\frac{1}{2}}dx=\frac{\pi\Gamma(p+2\lambda)}{2^{2\lambda-1}(p+\lambda)p!\Gamma^{2}(\lambda)}\delta_{pq}.
\end{equation}
The three-term recurrence relation for Gegenbauer polynomials is 
\begin{equation}
	\label{gegrecur}
	G_{p+1}^{(\lambda)}(x)=\frac{2(p+\lambda)}{p+1}xG_{p}^{(\lambda)}(x)-\frac{p+2\lambda-1}{p+1}G_{p-1}^{(\lambda)}(x),\quad p\geq 1,
\end{equation} 
with $G_{0}^{(\lambda)}(x)=1$ and $G_{1}^{(\lambda)}(x)=2\lambda x$. 

The Legendre polynomial of degree $p$ is defined as
\begin{equation}
	\label{legendre}
	L_{p}(x)=\frac{1}{2^{p}p!}\frac{d^{p}}{dx^{p}}(x^{2}-1)^{p}.
\end{equation}
Legendre polynomials are orthogonal over the interval $(-1,1)$, and their orthogonality is formulated by
\begin{equation}
	\label{legorth}
	\int_{-1}^{1}L_{p}(x)L_{q}(x)dx=\frac{2}{2p+1}\delta_{pq}.
\end{equation}
The three-term recurrence relation for Legendre polynomials is formulated as follows:
\begin{equation}
	\label{legrecur}
	L_{p+1}(x)=\frac{2p+1}{p+1}xL_{p}(x)-\frac{p}{p+1}L_{p-1}(x),\quad p\geq 1,
\end{equation}
with $L_{0}(x)=1$ and $L_{1}(x)=x$.  

\section{Recurrence relation with arbitrary degree steps}
\label{sec_arbrecurr}
\subsection{Degree increase case}
The case for degree increase means that we use two adjacent polynomials to derive the one of higher degree. For the sequence of orthogonal polynomials $\{Q_{p}(x)\}$, (\ref{Favard}) in Favard's Theorem \ref{thm1} can be rewritten as
\begin{equation}
\label{s1}
Q_{p+1}(x)=S_{p}(x)Q_{p}(x)-C_{p}Q_{p-1}(x),\; p\geq 1,
\end{equation}
with
\begin{equation}
\label{Sdef}
S_{p}(x)=A_{p}x+B_{p}, \quad p\geq 1.
\end{equation}
The degree $p$ starts from $1$ instead of $0$, because $Q_{-1}(x)=0$ which makes $Q_{0}(x)=1$ usually serve as the first member of the polynomials.

Given any degree $p$, we first derive $Q_{p+s}(x)$ from two adjacent members, $Q_{p}(x)$ and $Q_{p-1}(x)$. $s\geq 1$ denotes the degree step in this degree increase stage. Accordingly, we represent the coefficient of $Q_{p}(x)$ by $\Psi_{p+s}^{(p)}(x)$, and the one of  $Q_{p-1}(x)$ by $\Psi_{p+s}^{(p-1)}(x)$. Here, the superscript index, e.g., $p$ in $\Psi_{p+s}^{(p)}(x)$ means that this coefficient is for $Q_{p}(x)$, while the subscript index $p+s$ implies the degree of the polynomial $Q_{p+s}(x)$ to be computed. Subsequently, we analyze the rules according to which $\Psi_{p+s}^{(p)}(x)$ and $\Psi_{p+s}^{(p-1)}(x)$ vary with respect to the degree step $s$. 

When $s=1$, this case corresponds to (\ref{s1}), thus
\begin{equation}
	\label{s1coef}
	s=1:
	\left\{
	\begin{aligned}
	\Psi_{p+1}^{(p)}(x)&=S_{p}(x)\\
	\Psi_{p+1}^{(p-1)}(x)&=-C_{p}\\
	\end{aligned}
	\right..
\end{equation}

When $s=2$, substituting $p+2$  for $p+1$ in (\ref{s1}) yields
\begin{equation}
	\label{s2recurr}
	Q_{p+2}(x)=S_{p+1}(x)Q_{p+1}(x)-C_{p+1}Q_{p}(x).
\end{equation}
After substituting (\ref{s1}) into (\ref{s2recurr}), we get
\begin{equation}
	\label{s2}
    Q_{p+2}(x)=\big(S_{p+1}(x)S_{p}(x)-C_{p+1}\big)Q_{p}(x)-C_{p}S_{p+1}(x)Q_{p-1}(x).
\end{equation}
So
\begin{equation}
	\label{s2coef}
	s=2:
	\left\{
	\begin{aligned}
	\Psi_{p+2}^{(p)}(x)&=S_{p+1}(x)S_{p}(x)-C_{p+1}\\
	\Psi_{p+2}^{(p-1)}(x)&=-C_{p}S_{p+1}(x)\\
    \end{aligned}
     \right..
\end{equation}

When $s=3$, $Q_{p+3}(x)$ can be expressed by dint of (\ref{s1}).
\begin{equation}
	\label{s3recurr}
	Q_{p+3}(x)=S_{p+2}(x)Q_{p+2}(x)-C_{p+2}Q_{p+1}(x).
\end{equation}
Substituting (\ref{s2}) and (\ref{s1}) into (\ref{s3recurr}), we have the following expression:
\begin{equation}
	\label{s3}
	\begin{aligned}
	Q_{p+3}(x)=&\big(S_{p+2}(x)S_{p+1}(x)S_{p}(x)-C_{p+1}S_{p+2}(x)-C_{p+2}S_{p}(x)\big)Q_{p}(x)\\
	                         -&\big(C_{p}S_{p+2}(x)S_{p+1}(x)-C_{p+2}C_{p}\big)Q_{p-1}(x).
	\end{aligned}
\end{equation}
Obviously, the coefficients are
\begin{equation}
	\label{s3coef}
    	s=3:
	    \left\{
	     \begin{aligned}
          \Psi_{p+3}^{(p)}(x)&=S_{p+2}(x)S_{p+1}(x)S_{p}(x)-C_{p+1}S_{p+2}(x)-C_{p+2}S_{p}(x)\\
	      \Psi_{p+3}^{(p-1)}(x)&=-\big(C_{p}S_{p+2}(x)S_{p+1}(x)-C_{p+2}C_{p}\big)\\
	     \end{aligned}
	     \right..
\end{equation}

According to the above analysis, it is not difficult to summarize the recursive relations for the coefficients $\Psi_{p+s}^{(p)}(x)$ and $\Psi_{p+s}^{(p-1)}(x)$.
We formulate them in matrix form as
\begin{equation}
\label{uprecurr}
\left[
\begin{array}{l}
	\Psi_{p+s}^{(p)}(x)\\
	\Psi_{p+s}^{(p-1)}(x)\\
	\end{array}
	\right]=
	\left[
	\begin{array}{cc}
		\Psi_{p+s-1}^{(p)}(x)  &  	\Psi_{p+s-2}^{(p)}(x) \\
	    \Psi_{p+s-1}^{(p-1)}(x)  &  	\Psi_{p+s-2}^{(p-1)}(x) \\
	    \end{array}
	    \right]
	    \left[
	    \begin{array}{c}
	    	S_{p+s-1}(x)\\
	    	-C_{p+s-1}\\
	    		    \end{array}
	    	\right],\; s\geq 3
\end{equation}
with the initial conditions denoted by (\ref{s1coef}) and (\ref{s2coef}). 

Once $\Psi_{p+s}^{(p)}(x)$ and $\Psi_{p+s}^{(p-1)}(x)$ are obtained,  $Q_{p+s}(x)$ can be computed directly from $Q_{p}(x)$ and $Q_{p-1}(x)$. We summarize $Q_{p+s}(x)$ computation, or degree increase computation in the following theorem.
\begin{theorem}
If the orthogonal polynomials $\{Q_{p}(x)\}$ follow the recurrence relation denoted by (\ref{s1}), then any polynomial
$Q_{p+s}(x)$ of higher degree can be computed from the polynomials $Q_{p}(x)$ and $Q_{p-1}(x)$ by
\begin{equation}
\label{upexpre}
Q_{p+s}(x)=\Psi_{p+s}^{(p)}(x)Q_{p}(x)+\Psi_{p+s}^{(p-1)}(x)Q_{p-1}(x),\; p\geq 1\;\mathrm{and}\; s\geq 1
\end{equation}
where the coefficients $\Psi_{p+s}^{(p)}(x)$ and $\Psi_{p+s}^{(p-1)}(x)$ satisfy the recursive relations formulated by (\ref{uprecurr}).
\end{theorem}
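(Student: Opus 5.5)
We argue by strong induction on the step $s$, keeping $p\geq 1$ fixed throughout. The statement to be proved for each $s\geq 1$ is
\begin{equation*}
P(s):\quad Q_{p+s}(x)=\Psi_{p+s}^{(p)}(x)Q_{p}(x)+\Psi_{p+s}^{(p-1)}(x)Q_{p-1}(x),
\end{equation*}
where the coefficient pairs are given by (\ref{s1coef}) for $s=1$, by (\ref{s2coef}) for $s=2$, and by the matrix recursion (\ref{uprecurr}) for $s\geq 3$.

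The base cases come straight from the earlier computations. $P(1)$ is (\ref{s1}) itself, read against (\ref{s1coef}). $P(2)$ is (\ref{s2}), which was obtained by substituting (\ref{s1}) into (\ref{s2recurr}), so it matches (\ref{s2coef}). The explicit case $s=3$ in (\ref{s3}) serves only as a consistency check; the induction does not need it.

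For the inductive step, let $s\geq 3$ and assume $P(s-1)$ and $P(s-2)$. Apply (\ref{s1}) with index $p+s-1$ in place of $p$. This is legitimate because $p+s-1\geq 1$, and it gives
\begin{equation*}
Q_{p+s}(x)=S_{p+s-1}(x)Q_{p+s-1}(x)-C_{p+s-1}Q_{p+s-2}(x).
\end{equation*}
Substitute the expressions from $P(s-1)$ and $P(s-2)$ for $Q_{p+s-1}(x)$ and $Q_{p+s-2}(x)$, and collect terms in $Q_p(x)$ and $Q_{p-1}(x)$. The coefficient of $Q_p(x)$ becomes $\Psi_{p+s-1}^{(p)}S_{p+s-1}-\Psi_{p+s-2}^{(p)}C_{p+s-1}$. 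The coefficient of $Q_{p-1}(x)$ becomes $\Psi_{p+s-1}^{(p-1)}S_{p+s-1}-\Psi_{p+s-2}^{(p-1)}C_{p+s-1}$. These are exactly the two rows of the matrix–vector product in (\ref{uprecurr}), so $P(s)$ holds.

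The only real obstacle is that the recursion is second order in $s$. A plain one-step induction therefore fails, and this is why we need strong induction with two base cases and why the recursion starts at $s\geq 3$. One check is needed here: at $s=3$ the recursion calls on the $s=1$ and $s=2$ data, both of which have been established. We should also confirm that (\ref{s1}) holds for every index $p+s-1\geq 1$ at which it is applied. This is immediate from Favard's Theorem \ref{thm1}, since (\ref{Favard}) holds for all indices $\geq 0$.
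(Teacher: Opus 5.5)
Your proof is correct and follows the same route as the paper. The paper obtains the $s=2$ and $s=3$ cases by substituting the one-step relation (\ref{s1}) into the lower-step expansions, then reads off the recursion (\ref{uprecurr}) from that pattern; your strong induction (base cases $s=1,2$, substitution at index $p+s-1$) is the rigorous version of that extrapolation, which the paper leaves implicit.
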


It should be noted that similar relations were used to prove zero interlacing property of orthogonal polynomials \cite{beardon2011}\cite{jooste2021}. However, our objective in constructing this relation is to compute the coefficients $\Psi_{p+s}^{(p)}(x)$ and $\Psi_{p+s}^{(p-1)}(x)$, and to explore their properties as well.

The coefficients $\Psi_{p+s}^{(p)}(x)$ and $\Psi_{p+s}^{(p-1)}(x)$ corresponding to $Q_{p+s}(x)$ have some properties summarized by the following lemma.
\begin{lemma}
\label{wspro}
If the sequence of orthogonal polynomials $\{Q_{p}(x)\}$ satisfies the relation formulated by (\ref{upexpre}), then there exist three
conclusions:
	
	(a) No $x$ satisfies both $\Psi_{p+s}^{(p)}(x)=0$ and $\Psi_{p+s}^{(p-1)}(x)=0$.
	
	(b) No $x$ satisfies both $\Psi_{p+s-1}^{(p)}(x)=0$ and $\Psi_{p+s}^{(p)}(x)=0$.
	
	(c) No $x$ satisfies both $\Psi_{p+s-1}^{(p-1)}(x)=0$ and $\Psi_{p+s}^{(p-1)}(x)=0$.
\end{lemma}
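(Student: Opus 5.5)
Since all three claims concern common zeros, the natural tool is a Casoratian-type determinant built from the recursion (\ref{uprecurr}). For $s\geq 1$ set
\begin{equation*}
D_{s}(x)=\Psi_{p+s}^{(p)}(x)\Psi_{p+s-1}^{(p-1)}(x)-\Psi_{p+s-1}^{(p)}(x)\Psi_{p+s}^{(p-1)}(x),
\end{equation*}
with the convention $\Psi_{p}^{(p)}=1$ and $\Psi_{p}^{(p-1)}=0$, corresponding to $s=0$ in (\ref{upexpre}). With this convention (\ref{uprecurr}) also reproduces (\ref{s2coef}) from (\ref{s1coef}), so it holds for all $s\geq 2$. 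The goal is to show that $D_s(x)$ is a nonzero constant, independent of $x$, and then read off (b) and (c) directly.

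\textbf{Step 1: a recursion for $D_s$.} Substitute the rows of (\ref{uprecurr}) into $D_s$. The terms containing $S_{p+s-1}(x)$ cancel, leaving
\begin{equation*}
D_{s}=-C_{p+s-1}\big(\Psi_{p+s-2}^{(p)}\Psi_{p+s-1}^{(p-1)}-\Psi_{p+s-1}^{(p)}\Psi_{p+s-2}^{(p-1)}\big)=C_{p+s-1}D_{s-1}.
\end{equation*}

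\textbf{Step 2: the base case.} From (\ref{s1coef}) and the convention, $D_1=S_p\cdot 0-1\cdot(-C_p)=C_p$.

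\textbf{Step 3: closed form.} Iterating gives $D_s=\prod_{k=p}^{p+s-1}C_k$. Favard's Theorem \ref{thm1} guarantees $C_k\neq 0$ for all $k$, so $D_s\neq0$ for every $x$.

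\textbf{Step 4: deduce (b) and (c).} If $\Psi_{p+s-1}^{(p)}(x_0)=\Psi_{p+s}^{(p)}(x_0)=0$, then both products in $D_s(x_0)$ vanish, forcing $D_s(x_0)=0$, a contradiction. Claim (c) follows the same way, with both $\Psi^{(p-1)}$ factors vanishing at a common zero.

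\textbf{Step 5: prove (a).} I would not use $D_s$ here. Instead, suppose $\Psi_{p+s}^{(p)}(x_0)=\Psi_{p+s}^{(p-1)}(x_0)=0$. Then (\ref{upexpre}) gives $Q_{p+s}(x_0)=0$ for this fixed pair of coefficients, but that alone yields no contradiction. The cleaner route is to view $\Psi_{p+s}^{(p)}$ and $\Psi_{p+s}^{(p-1)}$ as functions of the upper index $m=p+s$. Fixing the lower index, both satisfy the same scalar recursion $y_{m+1}=S_m y_m-C_m y_{m-1}$ in $m$; this is exactly the content of (\ref{uprecurr}) read row by row. So each is a polynomial solution of the three-term recurrence.

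If both vanish at $x_0$, I run that recurrence backwards: since $C_m\neq0$, a common zero at one index must be related to values at the neighbouring index $m-1$. Vanishing of both functions at the same $m$ does not immediately force anything, though. A simpler observation works better. Consider the vector $(Q_{p}(x_0),Q_{p-1}(x_0))$ together with (\ref{upexpre}) for the two consecutive steps $s$ and $s+1$; this system is invertible because its determinant is $D_{s+1}\neq0$. Honestly, (a) may not follow from determinants alone. I expect (a) to be the main obstacle, and I would first check whether it holds or fails in small cases.

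Test with $s=2$: $\Psi^{(p-1)}_{p+2}=-C_pS_{p+1}$ vanishes only at $x_0=-B_{p+1}/A_{p+1}$, and there $\Psi^{(p)}_{p+2}=-C_{p+1}\neq0$. This suggests the general argument. At a zero of $\Psi_{p+s}^{(p-1)}$, use the "reversed" recursion expanding from the top, $\Psi_{p+s}^{(p)}=S_p\Psi_{p+s}^{(p+1)}\cdot(\ldots)$, i.e. write both coefficients in terms of coefficients of $Q_{p+1}$ and $Q_p$. The relevant reversed identity is $\Psi_{p+s}^{(p-1)}=-C_p\Psi_{p+s}^{(p+1)}$ and $\Psi_{p+s}^{(p)}=\Psi_{p+s}^{(p)}{}'$, where $\Psi_{p+s}^{(p)}{}'$ denotes the coefficient of $Q_p$ when $Q_{p+s}$ is expanded in the adjacent pair $(Q_{p+1},Q_p)$ instead of $(Q_p,Q_{p-1})$.

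A common zero then means the coefficients on $Q_{p+1}$ and $Q_p$ in the expansion started at $p+1$ vanish simultaneously, which is exactly an instance of the step-$(s-1)$ claim. Induction on $s$ from the base case $s=1$, where $\Psi^{(p-1)}_{p+1}=-C_p\neq0$, closes the argument. Justifying this index-shift identity rigorously is the key step I would verify carefully.
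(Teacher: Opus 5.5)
Your Steps 1--4 are correct and are essentially the paper's argument. Your $D_s$ is $\det(\mathbf{W})$ for the matrix in (\ref{wexplict}). Your recursion $D_s=C_{p+s-1}D_{s-1}$ recovers (\ref{detw}) directly from (\ref{uprecurr}), rather than from multiplicativity of the determinant over the product (\ref{upw}); this is, if anything, slightly more self-contained. Your zero-column arguments for (b) and (c) are exactly the paper's.

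The gap is in (a), where you abandon $D_s$ and suggest that (a) ``may not follow from determinants alone.'' It does, in one line. Each of the two products in $D_s=\Psi_{p+s}^{(p)}\Psi_{p+s-1}^{(p-1)}-\Psi_{p+s-1}^{(p)}\Psi_{p+s}^{(p-1)}$ contains a factor from the top row $\{\Psi_{p+s}^{(p)},\Psi_{p+s}^{(p-1)}\}$. So a common zero $x_0$ of that row gives $D_s(x_0)=0$, contradicting $D_s=\prod_{k=p}^{p+s-1}C_k\neq 0$. This zero-row argument is precisely how the paper proves (a). As submitted, your proof of (a) is unfinished, and the idea you were missing is already in your own Step 3.

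Your fallback induction can be rescued, but as written it has an error and an unjustified step. Write $\Phi_{p+s}^{(p+1)},\Phi_{p+s}^{(p)}$ for the coefficients generated by (\ref{uprecurr}) from the pair $(Q_{p+1},Q_p)$ with step $s-1$. The correct relations are $\Psi_{p+s}^{(p-1)}=-C_p\Phi_{p+s}^{(p+1)}$ and $\Psi_{p+s}^{(p)}=S_p\Phi_{p+s}^{(p+1)}+\Phi_{p+s}^{(p)}$, not $\Psi_{p+s}^{(p)}=\Phi_{p+s}^{(p)}$. Your version holds only where $\Phi_{p+s}^{(p+1)}$ vanishes. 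That is exactly the situation at a common zero, so the reduction to step $s-1$ and the base case $\Psi_{p+1}^{(p-1)}=-C_p\neq0$ do go through.

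However, you cannot obtain these relations by ``reading off the coefficient of $Q_p$'' in an expansion of $Q_{p+s}$. Polynomial coefficient pairs representing $Q_{p+s}$ in terms of two fixed polynomials are not unique. The relations must instead come from factoring the transfer matrix on the right, $\mathbf{W}=\mathbf{W}'\left[\begin{array}{cc} S_p & -C_p\\ 1 & 0\end{array}\right]$, where $\mathbf{W}'$ is the step-$(s-1)$ matrix started at $p+1$. That is, they come from the product representation (\ref{upw})--(\ref{wexplict}). Once you have that representation, the induction is valid but redundant, since the determinant already settles (a).
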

\begin{proof}
\label{profws}
We rewrite the recursive computation of $Q_{p+s}(x)$ and $Q_{p+s-1}(x)$ in form of matrix, i.e.
\begin{equation}
	\label{upexprmat}
	\left[
	\begin{array}{c}
		Q_{p+s}(x)\\
		Q_{p+s-1}(x)\\
	\end{array}
	\right]=\left[
	\begin{array}{cc}
		S_{p+s-1}(x)  &  -C_{p+s-1}\\
		1                         &    0\\
	\end{array}
	\right]\cdots
	\left[
	\begin{array}{cc}
		S_{p}(x)  &  -C_{p}\\
		1                         &    0\\
	\end{array}
	\right]
	\left[
	\begin{array}{c}
		Q_{p}(x)\\
		Q_{p-1}(x)\\
	\end{array}
	\right].
\end{equation}
The coefficient matrix $\mathbf{W}$ is therefore equal to
\begin{equation}
	\label{upw}
	\mathbf{W}=\left[
	\begin{array}{cc}
		S_{p+s-1}(x)  &  -C_{p+s-1}\\
		1                         &    0\\
	\end{array}
	\right]\cdots
	\left[
	\begin{array}{cc}
		S_{p+1}(x)  &  -C_{p+1}\\
		1                         &    0\\
	\end{array}
	\right]
	\left[
	\begin{array}{cc}
		S_{p}(x)  &  -C_{p}\\
		1                         &    0\\
	\end{array}
	\right].
\end{equation}
$C_{p}\neq 0$  for $p\geq 0$ is always satisfied according to Theorem \ref{thm1}. So,
\begin{equation}
	\label{detelem}
	\mathrm{det}\left(\left[
	\begin{array}{cc}
		S_{p}(x) & -C_{p}\\
		1        &    0\\
		\end{array}
	\right]\right)=C_{p},
\end{equation}
which determines
\begin{equation}
	\label{detw}
	\mathrm{det}(\mathbf{W})=\prod_{i=0}^{s-1}C_{p+i}\neq 0.
\end{equation}
On the other hand, $\mathbf{W}$ is substantially equal to
\begin{equation}
	\label{wexplict}
	\mathbf{W}=\left[
	\begin{array}{cc}
		\Psi_{p+s}^{(p)}(x)  &  \Psi_{p+s}^{(p-1)}(x)\\
		\Psi_{p+s-1}^{(p)}(x)  &   \Psi_{p+s-1}^{(p-1)}(x)\\
		\end{array}
	\right].
\end{equation}

When $\Psi_{p+s}^{(p)}(x)=0$ and $\Psi_{p+s}^{(p-1)}(x)=0$, this yields $\mathrm{det}(\mathbf{W})=0$. Likewise, both $\Psi_{p+s-1}^{(p)}(x)=0$ and $\Psi_{p+s-1}^{(p-1)}(x)=0$ still lead to $\mathrm{det}(\mathbf{W})=0$. Both results contradict (\ref{detw}), and these contradictions prove (a) in Lemma \ref{wspro}. 

When $\Psi_{p+s}^{(p)}(x)=0$ and $\Psi_{p+s-1}^{(p)}(x)=0$,  the first column of $\mathbf{W}$ is completely $0$, leading to $\mathrm{det}(\mathbf{W})=0$. This contradicts (\ref{detw}), thereby proving (b) in Lemma \ref{wspro}. 

Likewise, when $\Psi_{p+s}^{(p-1)}(x)=0$ and $\Psi_{p+s-1}^{(p-1)}(x)=0$,  the second column of $\mathbf{W}$ is completely $0$, resulting in $\mathrm{det}(\mathbf{W})=0$. This contradicts (\ref{detw}), which fact proves (c) in Lemma \ref{wspro}.  

Therefore, the proof of Lemma \ref{wspro} is completed.
\end{proof}

\subsection{Degree decrease case}
The case of degree decrease refers to computing a lower-degree polynomial from two adjacent polynomials. For the sequence of orthogonal polynomials $\{Q_{p}(x)\}$, we need to derive $Q_{p-t}(x)$ from $Q_{p}(x)$ and $Q_{p-1}(x)$. Here, $t\geq 1$ denotes the degree step in the degree decrease stage.
 
 We rewrite (\ref{s1}) into
 \begin{equation}
 	Q_{p-1}(x)=\frac{S_{p}(x)}{C_{p}}Q_{p}(x)-\frac{1}{C_{p}}Q_{p+1}(x),
 \end{equation}
and define the variables
 \begin{equation}
 	\label{tp}
 	T_{p}(x)=\frac{S_{p}(x)}{C_{p}},
 \end{equation}
 and
 \begin{equation}
 	\label{dp}
 	D_{p}=\frac{1}{C_{p}}.
 \end{equation}
 We then obtain the general recurrence relation for the degree decrease case
 \begin{equation}
 	\label{trecur}
 	Q_{p-1}(x)=T_{p}(x)Q_{p}(x)-D_{p}Q_{p+1}(x),
 \end{equation}
 which has a similar form to that for the degree increase case.
 
Likewise, the coefficient of $Q_{p}(x)$ is denoted as $\psi_{p-t}^{(p)}(x)$, while the one of $Q_{p-1}(x)$ is represented as  $\psi_{p-t}^{(p-1)}(x)$. It is necessary to analyze the rules according to which the coefficients $\psi_{p-t}^{(p)}(x)$ and $\psi_{p-t}^{(p-1)}(x)$ change with respect to $t$. We perform an analysis similar to the one for the degree increase case to summarize the variations of these coefficients.

When $t=1$, it is a special case beyond the expression of (\ref{trecur}). The following relation is established without a doubt.
\begin{equation}
	\label{t1}
	Q_{p-1}(x)=1\cdot Q_{p-1}(x)+0\cdot Q_{p}(x).
\end{equation}
The coefficients can be readily recognized as
\begin{equation}
	\label{t1coef}	
     t=1:\left\{
     \begin{aligned}
     \psi_{p-1}^{(p)}(x)&=0\\
     \psi_{p-1}^{(p-1)}(x)&=1\\
    \end{aligned}
    \right..
\end{equation}

When $t=2$, substituting $p-2$ for $p-1$ in (\ref{trecur}) yields 
\begin{equation}
	\label{t2recurr}
	Q_{p-2}(x)=T_{p-1}(x)Q_{p-1}(x)-D_{p-1}Q_{p}(x).
\end{equation}
It is obvious that the coefficients are
\begin{equation}
	\label{t2coef}
	t=2:
	\left\{
	\begin{aligned}
	\psi_{p-2}^{(p)}(x)&=-D_{p-1}\\
	\psi_{p-2}^{(p-1)}(x)&=T_{p-1}(x)\\
    \end{aligned}
     \right..
\end{equation}

When $t=3$, $Q_{p-3}(x)$ is expressed recursively according to (\ref{trecur}) as
\begin{equation}
	\label{t3recurr}
    Q_{p-3}(x)=T_{p-2}(x)Q_{p-2}(x)-D_{p-2}Q_{p-1}(x).
\end{equation}
After substituting (\ref{t2recurr}) into (\ref{t3recurr}), we get
\begin{equation}
	\label{t3}
	Q_{p-3}(x)=\big(T_{p-2}(x)T_{p-1}(x)-D_{p-2}\big)Q_{p-1}(x)-D_{p-1}T_{p-2}(x)Q_{p}(x).
\end{equation}
So, the coefficients are
\begin{equation}
	\label{t3coef}
	t=3:
	\left\{
	\begin{aligned}
	\psi_{p-3}^{(p)}(x)&=-D_{p-1}T_{p-2}(x)\\
	\psi_{p-3}^{(p-1)}(x)&=T_{p-2}(x)T_{p-1}(x)-D_{p-2}\\
    \end{aligned}
     \right..
\end{equation}

We further analyze the case where $t=4$ to clarify the rules. According to (\ref{trecur}), one can derive $Q_{p-4}(x)$ as
 \begin{equation}
 	\label{t4recurr}
 	Q_{p-4}(x)=T_{p-3}(x)Q_{p-3}(x)-D_{p-3}Q_{p-2}(x).
 \end{equation}
With the help of (\ref{t3}) and (\ref{t2recurr}), this polynomial is finally simplified as
\begin{equation}
	\label{t4}
	\begin{aligned}
     Q_{p-4}(x)=&\big(T_{p-3}(x)T_{p-2}(x)T_{p-1}(x)-D_{p-2}T_{p-3}(x)-D_{p-3}T_{p-1}(x)\big)Q_{p-1}(x)\\
                           -&\big(D_{p-1}T_{p-3}(x)T_{p-2}(x)-D_{p-3}D_{p-1}\big)Q_{p}(x)\\
	\end{aligned}.
\end{equation}
Similarly, the coefficients are
\begin{equation}
	\label{t4coef}
	t=4:
	\left\{
	\begin{aligned}
		\psi_{p-4}^{(p)}(x)&=-\big(D_{p-1}T_{p-3}(x)T_{p-2}(x)-D_{p-3}D_{p-1}\big)\\
		\psi_{p-4}^{(p-1)}(x)&=T_{p-3}(x)T_{p-2}(x)T_{p-1}(x)-D_{p-2}T_{p-3}(x)-D_{p-3}T_{p-1}(x)\\
	\end{aligned}
	\right..
\end{equation}

The above analysis helps us summarize the rule for the update of both coefficients.
\begin{equation}
	\label{downrecurr}
	\left[
	\begin{array}{l}
		\psi_{p-t}^{(p-1)}(x)\\
		\psi_{p-t}^{(p)}(x)\\
	\end{array}
	\right]=
	\left[
	\begin{array}{cc}
		\psi_{p-t+1}^{(p-1)}(x)  &  	\psi_{p-t+2}^{(p-1)}(x) \\
		\psi_{p-t+1}^{(p)}(x)  &  	\psi_{p-t+2}^{(p)}(x) \\
	\end{array}
	\right]
	\left[
	\begin{array}{c}
		T_{p-t+1}(x)\\
		-D_{p-t+1}\\
	\end{array}
	\right],\; t\geq 3
\end{equation}
with the initial conditions given in (\ref{t1coef}) and (\ref{t2coef}).

Once the coefficients are available, the computation of $Q_{p-t}(x)$ becomes straightforward. We summarize the process in the following theorem.
\begin{theorem}\label{gencase}
If the orthogonal polynomials $\{Q_{p}(x)\}$ follow the recurrence relation given in (\ref{s1}), then any polynomial $Q_{p-t}(x)$ of lower degree
can be computed from the polynomials $Q_{p}(x)$ and $Q_{p-1}(x)$ by
\begin{equation}
	\label{downexpre}
	Q_{p-t}(x)=\psi_{p-t}^{(p)}(x)Q_{p}(x)+\psi_{p-t}^{(p-1)}(x)Q_{p-1}(x),\;  t \geq 1\;\mathrm{and} \; p\geq t,
\end{equation}
where the coefficients $\psi_{p-t}^{(p)}(x)$ and $\psi_{p-t}^{(p-1)}(x)$ satisfy the recursive relations formulated by (\ref{downrecurr}).
\end{theorem}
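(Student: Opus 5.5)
We prove Theorem \ref{gencase} by induction on $t$, using only the backward recurrence (\ref{trecur}); this is legitimate because $C_{p}\neq 0$ by Theorem \ref{thm1}, so $T_{p}(x)$ and $D_{p}$ in (\ref{tp}) and (\ref{dp}) are well defined. The statement to prove for each $t\geq 1$ (with $p\geq t$) is
\begin{equation*}
Q_{p-t}(x)=\psi_{p-t}^{(p)}(x)Q_{p}(x)+\psi_{p-t}^{(p-1)}(x)Q_{p-1}(x),
\end{equation*}
where the coefficients are generated by (\ref{downrecurr}) from the initial data (\ref{t1coef}) and (\ref{t2coef}). Since the recursion in (\ref{downrecurr}) uses the two previous steps, the induction must carry the pair of identities for $t-1$ and $t-2$.

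\textbf{Base cases.} For $t=1$ the identity is the trivial relation (\ref{t1}). For $t=2$ it is exactly (\ref{t2recurr}), which is (\ref{trecur}) with $p$ replaced by $p-1$. This substitution requires $p-1\geq 1$, i.e. $p\geq 2=t$, so the range of validity of (\ref{s1}) is respected. Note that for $t=2$ the coefficient of $Q_{p}$ is $-D_{p-1}$ and that of $Q_{p-1}$ is $T_{p-1}$, matching (\ref{t2coef}).

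\textbf{Inductive step.} Let $t\geq 3$ with $p\geq t$, and assume the identity holds for $t-1$ and $t-2$. Apply (\ref{trecur}) with index $p-t+1$:
\begin{equation*}
Q_{p-t}(x)=T_{p-t+1}(x)Q_{p-t+1}(x)-D_{p-t+1}Q_{p-t+2}(x).
\end{equation*}
This is valid because $p-t+1\geq 1$, so (\ref{s1}) holds at that index. Substitute the inductive expressions for $Q_{p-t+1}=Q_{p-(t-1)}$ and $Q_{p-t+2}=Q_{p-(t-2)}$, and collect the coefficients of $Q_{p-1}$ and of $Q_{p}$. This gives
\begin{equation*}
\psi_{p-t}^{(p-1)}=\psi_{p-t+1}^{(p-1)}T_{p-t+1}-\psi_{p-t+2}^{(p-1)}D_{p-t+1},\qquad
\psi_{p-t}^{(p)}=\psi_{p-t+1}^{(p)}T_{p-t+1}-\psi_{p-t+2}^{(p)}D_{p-t+1},
\end{equation*}
which is precisely the matrix recursion (\ref{downrecurr}). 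As a sanity check, the cases $t=3,4$ reproduce (\ref{t3coef}) and (\ref{t4coef}).

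\textbf{Main difficulty.} There is no real obstacle; the points that need care are bookkeeping. First, the hypothesis $p\geq t$ must guarantee that every backward step uses (\ref{s1}) at an index $\geq 1$; this is what prevents reaching the convention $Q_{-1}=0$. Second, the role of $t=1$ differs from the increasing case: its coefficients $(0,1)$ do not come from (\ref{trecur}) but are needed as the "$t-2$" input when $t=3$. So one should check explicitly that (\ref{downrecurr}) at $t=3$, fed with (\ref{t1coef}) and (\ref{t2coef}), returns (\ref{t3coef}). This confirms that the induction is consistently anchored.
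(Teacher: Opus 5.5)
Your proof is correct and follows the paper's own route. The paper obtains the cases $t=2,3,4$ by exactly this back-substitution into (\ref{trecur}) and then reads off (\ref{downrecurr}) from the pattern. Your strong induction on $t$ is the rigorous version of that step: it carries the $t-1$ and $t-2$ identities, is anchored at (\ref{t1coef}) and (\ref{t2coef}), and uses $p\geq t$ to keep every index at which (\ref{s1}) is inverted $\geq 1$.
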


The coefficients $\psi_{p-t}^{(p)}(x)$ and $\psi_{p-t}^{(p-1)}(x)$ for polynomial $Q_{p-t}(x)$ have similar properties described in the following lemma.
\begin{lemma}
\label{wtpro}
If the sequence of orthogonal polynomials $\{Q_{p}(x)\}$ supports the relation formulated by (\ref{downexpre}), then there exist three
conclusions:

(a) No $x$ satisfies both $\psi_{p-t}^{(p)}(x)=0$ and $\psi_{p-t}^{(p-1)}(x)=0$.

(b) No $x$ satisfies both $\psi_{p-t+1}^{(p)}(x)=0$ and $\psi_{p-t}^{(p)}(x)=0$.

(c) No $x$ satisfies both $\psi_{p-t+1}^{(p-1)}(x)=0$ and $\psi_{p-t}^{(p-1)}(x)=0$.
\end{lemma}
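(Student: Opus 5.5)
The plan is to mirror the proof of Lemma \ref{wspro}. I will express the degree decrease recursion (\ref{trecur}) as a product of $2\times 2$ transfer matrices, identify the product with the matrix of coefficients $\psi$, and show its determinant never vanishes. From (\ref{trecur}) we have, for each $k$,
\begin{equation*}
\left[\begin{array}{c} Q_{k-1}(x)\\ Q_{k}(x)\end{array}\right]
=\left[\begin{array}{cc} T_{k}(x) & -D_{k}\\ 1 & 0\end{array}\right]
\left[\begin{array}{c} Q_{k}(x)\\ Q_{k+1}(x)\end{array}\right].
\end{equation*}
Chaining these for $k=p-t+1,\dots,p-1$ gives
\begin{equation*}
\left[\begin{array}{c} Q_{p-t}(x)\\ Q_{p-t+1}(x)\end{array}\right]=\mathbf{V}\left[\begin{array}{c} Q_{p-1}(x)\\ Q_{p}(x)\end{array}\right],
\end{equation*}
where $\mathbf{V}$ is the ordered product $\prod_{k=p-t+1}^{p-1}$ of these elementary matrices. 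For $t=1$, $\mathbf{V}$ is the empty product $I$.

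The next step is to prove, by induction on $t$ using (\ref{downrecurr}) and the initial data (\ref{t1coef}) and (\ref{t2coef}), that
\begin{equation*}
\mathbf{V}=\left[\begin{array}{cc}\psi_{p-t}^{(p-1)}(x) & \psi_{p-t}^{(p)}(x)\\ \psi_{p-t+1}^{(p-1)}(x) & \psi_{p-t+1}^{(p)}(x)\end{array}\right].
\end{equation*}
The base case $t=1$ is the identity, matching $\psi_{p-1}^{(p-1)}=1$, $\psi_{p-1}^{(p)}=0$, together with the convention $\psi_{p}^{(p-1)}=0$, $\psi_{p}^{(p)}=1$. The case $t=2$ matches (\ref{t2coef}). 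The inductive step is that left multiplication by the elementary matrix with index $p-t+1$ reproduces exactly (\ref{downrecurr}) in the first row and shifts the old first row to the second row. This identification is the step needing the most care, because the ordering of columns ($Q_{p-1}$ first, then $Q_{p}$) and the index conventions must be checked against (\ref{downrecurr}). Any mismatch would only permute rows or columns, which affects the determinant by a sign and does not change the argument.

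The determinant is then immediate. Each elementary factor has determinant $D_k=1/C_k$, which is finite and nonzero because $C_k\neq 0$ by Theorem \ref{thm1}. Hence
\begin{equation*}
\det\mathbf{V}=\prod_{k=p-t+1}^{p-1}\frac{1}{C_k}\neq 0
\end{equation*}
for every $x$.

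The three claims follow exactly as in Lemma \ref{wspro}. For (a), the simultaneous vanishing of $\psi_{p-t}^{(p)}$ and $\psi_{p-t}^{(p-1)}$ makes a row of $\mathbf{V}$ zero. For (b), the simultaneous vanishing of $\psi_{p-t}^{(p)}$ and $\psi_{p-t+1}^{(p)}$ makes the second column zero. For (c), the simultaneous vanishing of $\psi_{p-t}^{(p-1)}$ and $\psi_{p-t+1}^{(p-1)}$ makes the first column zero. In each case $\det\mathbf{V}=0$, which contradicts the nonzero determinant.
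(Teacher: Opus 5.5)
Your proposal is correct and matches the paper's proof: the same transfer-matrix product $\mathbf{V}$ applied to $(Q_{p-1},Q_{p})$, the same determinant $\prod D_k\neq 0$ with $D_k=1/C_k$, and the same zero-row/zero-column contradictions for (a), (b), (c). Your explicit induction from (\ref{downrecurr}), using the convention $\psi_{p}^{(p-1)}=0$, $\psi_{p}^{(p)}=1$ for $t=1$, is a useful addition, since the paper only asserts that $\mathbf{V}$ equals the $\psi$-coefficient matrix.
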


The proof of Lemma \ref{wtpro} is based on the idea of proof of Lemma\ref{wspro}.
\begin{proof}
The computation of $Q_{p-t}(x)$ and $Q_{p-t+1}(x)$ can be rewritten in matrix form according to (\ref{t2recurr}) 
	\label{profwt}
	\begin{equation}
		\label{downexprmat}
		\left[
		\begin{array}{c}
			Q_{p-t}(x)\\
			Q_{p-t+1}(x)\\
		\end{array}
		\right]=\left[
		\begin{array}{cc}
			T_{p-t+1}(x)  &  -D_{p-t+1}\\
			1                         &    0\\
		\end{array}
		\right]\cdots
		\left[
		\begin{array}{cc}
			T_{p-1}(x)  &  -D_{p-1}\\
			1                         &    0\\
		\end{array}
		\right]
		\left[
		\begin{array}{c}
			Q_{p-1}(x)\\
			Q_{p}(x)\\
		\end{array}
		\right].
	\end{equation}
	The coefficient matrix $\mathbf{V}$ is equal to
	\begin{equation}
		\label{downv}
		\mathbf{V}=\left[
		\begin{array}{cc}
			T_{p-t+1}(x)  &  -D_{p-t+1}\\
			1                         &    0\\
		\end{array}
		\right]\cdots
		\left[
		\begin{array}{cc}
			T_{p-2}(x)  &  -D_{p-2}\\
			1                         &    0\\
		\end{array}
		\right]
		\left[
		\begin{array}{cc}
			T_{p-1}(x)  &  -D_{p-1}\\
			1                         &    0\\
		\end{array}
		\right].
	\end{equation}
	According to Theorem \ref{thm1} and $(\ref{dp})$, we are sure that $D_{p}\neq 0$  for $p\geq 0$. So, the determinant of $\mathbf{V}$ is
	\begin{equation}
		\label{detv}
		\mathrm{det}(\mathbf{V})=\prod_{i=0}^{t-2}D_{p-i-1}\neq 0.
	\end{equation}
	Meanwhile, $\mathbf{V}$ is substantially equal to
	\begin{equation}
		\label{vexplict}
		\mathbf{V}=\left[
		\begin{array}{cc}
			\psi_{p-t}^{(p-1)}(x)  &  \psi_{p-t}^{(p)}(x)\\
			\psi_{p-t+1}^{(p-1)}(x)  &   \psi_{p-t+1}^{(p)}(x)\\
		\end{array}
		\right].
	\end{equation}
	
	Both $\psi_{p-t}^{(p)}(x)=0$ and $\psi_{p-t}^{(p-1)}(x)=0$ yield $\mathrm{det}(\mathbf{V})=0$, and so do $\psi_{p-t+1}^{(p)}(x)=0$ and $\psi_{p-t+1}^{(p-1)}(x)=0$. The results contradict (\ref{detv}), and these contradictions prove (a) in Lemma \ref{wtpro}. 
	
	When $\psi_{p-t}^{(p)}(x)=0$ and $\psi_{p-t+1}^{(p)}(x)=0$,  the second column of $\mathbf{V}$ is completely $0$, leading to $\mathrm{det}(\mathbf{V})=0$. This contradicts (\ref{detv}), thereby proving (b) in Lemma \ref{wtpro}. 
	
	Likewise, when $\psi_{p-t}^{(p-1)}(x)=0$ and $\psi_{p-t+1}^{(p-1)}(x)=0$,  the first column of $\mathbf{V}$ is completely $0$, resulting in $\mathrm{det}(\mathbf{V})=0$. This contradicts (\ref{detv}), which fact proves (c) in Lemma \ref{wtpro}.  
	
	So, the proof of Lemma \ref{wtpro} is completed.
\end{proof}
\subsection{Generalized recurrence relation}
According to (\ref{upexpre}) and (\ref{downexpre}), we can  construct an equation set in form of matrix like
\begin{equation}
	\label{genform}
    \left[
    \begin{array}{l}
	Q_{p+s}(x)\\
	Q_{p-t}(x)\\
    \end{array}
    \right]=
    \mathbf{K}
     \left[
     \begin{array}{c}
	  Q_{p}(x)\\
	  Q_{p-1}(x)\\
     \end{array}
      \right],
\end{equation} 
in which the coefficient matrix is defined as
\begin{equation}
	\label{coeffmat}
	\mathbf{K}=\left[
	\begin{array}{cc}
		\Psi_{p+s}^{(p)}(x)  &  	\Psi_{p+s}^{(p-1)}(x) \\
		\psi_{p-t}^{(p)}(x)     &   	\psi_{p-t}^{(p-1)}(x) \\
	\end{array}
	\right].
\end{equation}

Obviously, we can establish a relation between (\ref{upexpre}) and (\ref{downexpre}), taking $Q_{p-1}(x)$ as the bridge
\begin{equation}
\label{fracrelat}
\frac{Q_{p+s}(x)-\Psi_{p+s}^{(p)}(x)Q_{p}(x)}{\Psi_{p+s}^{(p-1)}(x)}=\frac{Q_{p-t}(x)-\psi_{p-t}^{(p)}(x)Q_{p}(x)}{\psi_{p-t}^{(p-1)}(x)},
\end{equation}
where $\Psi_{p+s}^{(p-1)}(x)\neq 0$ and $\psi_{p-t}^{(p-1)}(x)\neq 0$.  Further simplification of (\ref{fracrelat}) will generate a three-term recurrence relation with respect to $Q_{p+s}(x)$, $Q_{p}(x)$ and $Q_{p-t}(x)$. We summarize it in the following theorem, which serves as the main contribution of this paper.
\begin{theorem}
\label{mainthm}
If the orthogonal polynomials $Q_{p}(x)$ follow the recurrence relation denoted by (\ref{Favard}), then they satisfy a three-term recurrence relation with arbitrary degree steps:
\begin{equation}
\label{abi_rec}
Q_{p+s}(x)=M(x)Q_{p}(x)+N(x)Q_{p-t}(x),\;  1 \leq t\leq p, \; \mathrm{and}\; s\geq 1
\end{equation}
with
\begin{equation}
\label{stepcoeff}
\left\{
\begin{aligned}
	M(x)= &\frac{\mathrm{det}(\mathbf{K})}{\psi_{p-t}^{(p-1)}(x)}\\
	N(x)= &\frac{\Psi_{p+s}^{(p-1)}(x)}{\psi_{p-t}^{(p-1)}(x)}\\
	\end{aligned}
	\right..
\end{equation}
\end{theorem}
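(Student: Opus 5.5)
The plan is to eliminate $Q_{p-1}(x)$ between the two linear relations collected in (\ref{genform}). The first row is the degree-increase relation (\ref{upexpre}) from the first theorem of Section~\ref{sec_arbrecurr}. The second row is the degree-decrease relation (\ref{downexpre}) from Theorem~\ref{gencase}, valid since $1\le t\le p$. For any fixed $x$ with $\psi_{p-t}^{(p-1)}(x)\neq 0$, I solve the second relation for $Q_{p-1}(x)$:
\begin{equation*}
Q_{p-1}(x)=\frac{Q_{p-t}(x)-\psi_{p-t}^{(p)}(x)Q_{p}(x)}{\psi_{p-t}^{(p-1)}(x)}.
\end{equation*}
Substituting this into (\ref{upexpre}) and collecting terms gives
\begin{equation*}
Q_{p+s}(x)=\frac{\Psi_{p+s}^{(p)}(x)\psi_{p-t}^{(p-1)}(x)-\Psi_{p+s}^{(p-1)}(x)\psi_{p-t}^{(p)}(x)}{\psi_{p-t}^{(p-1)}(x)}\,Q_{p}(x)+\frac{\Psi_{p+s}^{(p-1)}(x)}{\psi_{p-t}^{(p-1)}(x)}\,Q_{p-t}(x).
\end{equation*}
The numerator of the first coefficient is exactly $\det(\mathbf{K})$ by (\ref{coeffmat}), so the coefficients are the $M(x)$ and $N(x)$ of (\ref{stepcoeff}). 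This matches the cross-multiplied form of (\ref{fracrelat}), but without the extra assumption $\Psi_{p+s}^{(p-1)}(x)\neq0$.

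Next I check the edge case $t=1$. By (\ref{t1coef}) we have $\psi_{p-1}^{(p-1)}=1$ and $\psi_{p-1}^{(p)}=0$. Then $M=\Psi_{p+s}^{(p)}$ and $N=\Psi_{p+s}^{(p-1)}$, which is just (\ref{upexpre}), so the statement is consistent there.

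The main obstacle is the zeros of $\psi_{p-t}^{(p-1)}(x)$, where (\ref{stepcoeff}) is a priori undefined. I would treat (\ref{abi_rec}) as an identity of rational functions in $x$ that holds wherever the denominator is nonzero. The denominator is a nonzero polynomial: by the recursion (\ref{downrecurr}), $\psi_{p-t}^{(p-1)}$ has degree $t-1$ with nonzero leading coefficient $\prod A_i/C_i$ (using $A_i\neq0$ from Theorem~\ref{thm1}), so it has only finitely many zeros.

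It remains to describe what happens at those zeros. Lemma~\ref{wtpro}(a) guarantees that $\psi_{p-t}^{(p)}(x)\neq0$ at a zero of $\psi_{p-t}^{(p-1)}$, so $Q_{p-t}=\psi_{p-t}^{(p)}Q_p$ there. I would remark that at such points the relation is understood as the cleared-denominator identity
\begin{equation*}
\psi_{p-t}^{(p-1)}(x)\,Q_{p+s}(x)=\det(\mathbf{K})\,Q_p(x)+\Psi_{p+s}^{(p-1)}(x)\,Q_{p-t}(x).
\end{equation*}
This polynomial identity holds for all $x$, by the same elimination or by continuity.
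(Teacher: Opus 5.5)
Your proof is correct and takes essentially the paper's route: you eliminate $Q_{p-1}(x)$ between (\ref{upexpre}) and (\ref{downexpre}), which is what the paper does through (\ref{fracrelat}) followed by ``further simplification.'' Solving only the degree-decrease relation for $Q_{p-1}(x)$ is a small improvement, since it removes the paper's unnecessary hypothesis $\Psi_{p+s}^{(p-1)}(x)\neq 0$, and your cleared-denominator identity cleanly handles the finitely many zeros of $\psi_{p-t}^{(p-1)}(x)$, which the paper leaves to its separate case analysis.
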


Note that both $M(x)$ and $N(x)$ are substantially functions with respect to the variable $x$, the intrinsic parameters $\mathbf{\Theta}$ of the polynomials, and the coefficients $(A_{i},B_{i},C_{i})$ defined in (\ref{Favard}) with $p+s-1\geq i\geq p-t+1$. In general, given any class of orthogonal polynomials, the coefficients $(A_{i},B_{i},C_{i})$ can be easily determined according to Favard's theorem. Consequently, we are able to acquire the new recurrence relation with arbitrary degree steps set by $(s,t)$ for this kind of orthogonal polynomials. 

 \subsection{Analysis of special cases}
 \label{sub_spec}
 (\ref{abi_rec}) is established under the condition that both $\Psi_{p+s}^{(p-1)}(x) \neq 0$ and $\psi_{p-t}^{(p-1)}(x) \neq 0$ in (\ref{fracrelat}). When these conditions are not satisfied, it will fall into two special cases, for which we discuss the corresponding resolutions in this subsection. 
  
 The first case is that $\Psi_{p+s}^{(p-1)}(x)=0$ . In this condition, (\ref{upexpre}) reduces to
 \begin{equation}
    \label{upexpres1}
    Q_{p+s}(x)=\Psi_{p+s}^{(p)}(x)Q_{p}(x).
  \end{equation}
 It is straightforward to extend (\ref{upexpres1}) to a three-term recurrence relation
\begin{equation}
   \label{abi_recs2}
   Q_{p+s}(x)=\Psi_{p+s}^{(p)}(x)Q_{p}(x)+0\cdot Q_{p-t}(x),
\end{equation}
which explicitly assigns a coefficient with permanent $0$ value for $Q_{p-t}(x)$. This means substantially that $Q_{p+s}(x)$ is independent of $Q_{p-t}(x)$ in this case. Moreover, no matter whether $\psi_{p-t}^{(p-1)}(x)$ is equal to $0$ or not, (\ref{abi_recs2}) is always valid as long as $\Psi_{p+s}^{(p-1)}(x)=0$. 
 
The second special case is when $\Psi_{p+s}^{(p-1)}(x)\neq 0$ but $\psi_{p-t}^{(p-1)}(x)=0$.  (\ref{upexpre}) remains valid in this case; however (\ref{downexpre}) reduces to 
 \begin{equation}
    \label{downexpres1}
	 Q_{p-t}(x)=\psi_{p-t}^{(p)}(x)Q_{p}(x).
\end{equation}
$Q_{p-1}(x)$ is the key for computing $Q_{p+s}(x)$ as formulated in (\ref{upexpre}).  Nevertheless, it is impossible to rely on $Q_{p-t}(x)$ to compute $Q_{p-1}(x)$ due to (\ref{downexpres1}). In this circumstance, we utilize either $Q_{0}(x)\equiv 1$ or $Q_{1}(x)$, together with $Q_{p}(x)$ to compute $Q_{p-1}(x)$ based on (\ref{downexpre}). Since
\begin{equation}
   \label{q0rel}
   Q_{0}(x)=\psi_{0}^{(p)}(x)Q_{p}(x)+\psi_{0}^{(p-1)}(x)Q_{p-1}(x),
\end{equation}
we can obtain
\begin{equation}
	\label{qn1v0}
	Q_{p-1}(x)=\frac{Q_{0}(x)-\psi_{0}^{(p)}(x)Q_{p}(x)}{\psi_{0}^{(p-1)}(x)}.
\end{equation}

(\ref{downexpres1}) shows that $Q_{p-t}(x)$ is independent of $Q_{p-1}(x)$. When $p-t=0$, $Q_{p-t}(x)$ is substantially $Q_{0}(x)$. According to (\ref{downexpres1}), the relation holds
\begin{equation}
	\label{q0expre}
	Q_{0}(x)=\psi_{0}^{(p)}(x)Q_{p}(x),
\end{equation}
with $\psi_{0}^{(p-1)}(x)=0$. Therefore, (\ref{qn1v0}) fails to compute $Q_{p-1}(x)$ under this condition. We then choose $Q_{1}(x)$ instead of $Q_{0}(x)$ to calculate $Q_{p-1}(x)$. According to Lemma \ref{wtpro}, $\psi_{1}^{(p-1)}(x)\neq 0$ must hold when $\psi_{0}^{(p-1)}(x)=0$. Therefore, $Q_{p-1}(x)$ can be acquired from (\ref{downexpre}) as
\begin{equation}
	\label{qn1v1}
	Q_{p-1}(x)=\frac{Q_{1}(x)-\psi_{1}^{(p)}(x)Q_{p}(x)}{\psi_{1}^{(p-1)}(x)}.
\end{equation}
Likewise, when $\psi_{1}^{(p-1)}(x)=0$, we use (\ref{qn1v0}) to compute $Q_{p-1}(x)$.

Once $Q_{p-1}(x)$ is known, a new three-term recurrence relation can be given under the condition that $Q_{p-t}(x)\neq 0$ as
\begin{equation}
	\label{abi_recs3}
	Q_{p+s}(x)=\Psi_{p+s}^{(p)}(x)Q_{p}(x)+L(x)Q_{p-t}(x),
\end{equation}
where
\begin{equation}
	\label{Lx}
	L(x)=\frac{\Psi_{p+s}^{(p-1)}(x)Q_{p-1}(x)}{Q_{p-t}(x)}.
\end{equation}

When $Q_{p-t}(x)=0$, we can infer from (\ref{downexpres1}) that $\psi_{p-t}^{(p)}(x)\neq 0$, and then $Q_{p}(x)=0$ according to Lemma \ref{wtpro}. The condition $Q_{p}(x)=0$ implies that $Q_{p-1}(x)\neq 0$; otherwise it would lead to $Q_{0}(x)=0$, which contradicts the initial condition $Q_{0}(x)\equiv 1$. Thus, (\ref{upexpre}) is reduced to
\begin{equation}
	\label{upexpres2}
	Q_{p+s}(x)=\Psi_{p+s}^{(p-1)}(x)Q_{p-1}(x)\neq 0.
\end{equation}
Since both $Q_{p}(x)=0$ and $Q_{p-t}(x)=0$, while $Q_{p+s}(x)\neq 0$, it is impossible to compute $Q_{p+s}(x)$ from $Q_{p}(x)$ and $Q_{p-t}(x)$ via a standard three-term recurrence relation. We introduce a bias term $b(x)$ to handle such problem:
\begin{equation}
	\label{upexpres3}
	Q_{p+s}(x)= l_{1}\cdot Q_{p}(x)+l_{2}\cdot Q_{p-t}(x)+b(x),
\end{equation}
where
\begin{equation}
	\label{biasb}
	b(x)=\Psi_{p+s}^{(p-1)}(x)Q_{p-1}(x),
\end{equation}
and $l_{1}$ and $l_{2}$ are arbitrary real numbers.
	
\section{Recurrence relations with different directions}
\label{sec_direction}
Section \ref{sec_arbrecurr} discusses the computation of $Q_{p+s}(x)$ from both $Q_{p}(x)$ and $Q_{p-t}(x)$, exhibiting the classical recursive formula for increasing degree. In this section, we analyze two other recurrence directions. One is the ``degree decrease'' direction, which computes $Q_{p-t}(x)$ from $Q_{p+s}(x)$ and $Q_{p}(x)$; the other is the ``end-to-middle'' one, which calculates $Q_{p}(x)$ from both $Q_{p+s}(x)$ and $Q_{p-t}(x)$.

\subsection{Degree decrease}
According to (\ref{fracrelat}), we can obtain
\begin{equation}
	\label{dereduc}
	Q_{p-t}(x)=\hat{M}(x)Q_{p+s}(x)+\hat{N}(x)Q_{p}(x),
\end{equation}
where the coefficients $\hat{M}(x)$ and $\hat{N}(x)$ are calculated by
\begin{equation}
	\label{coeffdecr}
	\left\{
	\begin{aligned}
		\hat{M}(x)=&\frac{\psi_{p-t}^{(p-1)}(x)}{\Psi_{p+s}^{(p-1)}(x)}\\
		\hat{N}(x)=&-\frac{\mathrm{det}(\mathbf{K})}{\Psi_{p+s}^{(p-1)}(x)}\\
	\end{aligned}
	\right..
\end{equation}

The first special case is when $\psi_{p-t}^{(p-1)}(x)=0$, where (\ref{downexpre}) reduces to (\ref{downexpres1}). It is straightforward to determine the three-term recurrence relation as
\begin{equation}
	\label{decrsp1}
	Q_{p-t}(x)=\psi_{p-t}^{(p)}(x)Q_{p}(x)+0\cdot Q_{p+s}(x).
\end{equation}

The second special case is when $\Psi_{p+s}^{(p-1)}(x)=0$ and $\psi_{p-t}^{(p-1)}(x)\neq 0$. In this case, we compute $Q_{p-t}(x)$ from (\ref{downexpre}).  $Q_{p-1}(x)$ is computed from $Q_{p}(x)$ and either $Q_{0}(x)$ or $Q_{1}(x)$, similar to the strategy described in (\ref{qn1v0}) and (\ref{qn1v1}).  Once $Q_{p-1}(x)$ is known and $Q_{p+s}(x)\neq 0$, the three-term recurrence relation for this case is
\begin{equation}
	\label{decre_sp2}
	Q_{p-t}(x)=\psi_{p-t}^{(p)}(x)Q_{p}(x)+\hat{L}(x)Q_{p+s}(x),
\end{equation}
where
\begin{equation}
	\label{hatLx}
	\hat{L}(x)=\frac{\psi_{p-t}^{(p-1)}(x)Q_{p-1}(x)}{Q_{p+s}(x)}.
\end{equation}

When $Q_{p+s}(x)=0$ and $\Psi_{p+s}^{(p-1)}(x)=0$, we can make sure that $\Psi_{p+s}^{(p)}(x)\neq 0$ according to Lemma \ref{wspro}. Therefore, this yields $Q_{p}(x)=0$, which also guarantees that $Q_{p-1}(x)\neq 0$. Similarly to (\ref{upexpres3}), the relation for this case is
\begin{equation}
	\label{downexpres3}
	Q_{p-t}(x)= \hat{l}_{1}\cdot Q_{p+s}(x)+\hat{l}_{2}\cdot Q_{p}(x)+\hat{b}(x),
\end{equation}
where the bias term is
\begin{equation}
	\label{biasb}
	\hat{b}(x)=\psi_{p-t}^{(p-1)}(x)Q_{p-1}(x)\neq 0,
\end{equation}
and the coefficients $\hat{l}_{1}$ and $\hat{l}_{2}$ are arbitrary real numbers.

\subsection{End-to-middle}
The relation that expresses $Q_{p}(x)$ in terms of $Q_{p+s}(x)$ and $Q_{p-t}(x)$ can also be derived from (\ref{fracrelat})
\begin{equation}
	\label{endmidrec}
	Q_{p}(x)=\tilde{M}(x)Q_{p+s}(x)+\tilde{N}(x)Q_{p-t}(x),
\end{equation}
where the coefficients $\tilde{M}(x)$ and $\tilde{N}(x)$ have the explicit expressions
\begin{equation}
	\label{coeffendmid}
	\left\{
	\begin{aligned}
		\tilde{M}(x)=&\frac{\psi_{p-t}^{(p-1)}(x)}{\mathrm{det}(\mathbf{K})}\\
		\tilde{N}(x)=&-\frac{\Psi_{p+s}^{(p-1)}(x)}{\mathrm{det}(\mathbf{K})}\\
	\end{aligned}
	\right..
\end{equation}
Note that (\ref{coeffendmid}) is valid under the conditions that $\Psi_{p+s}^{(p-1)}(x)\neq 0$, $\psi_{p-t}^{(p-1)}(x)\neq 0$ and $\mathrm{det}(\mathbf{K})\neq 0$.

There are three special cases for this recursive direction. We provide the detailed analysis and the corresponding resolutions for these cases.

(I) The condition that $\Psi_{p+s}^{(p-1)}(x)=0$. Under this condition, (\ref{upexpre}) reduces to (\ref{upexpres1}). From Lemma \ref{wspro}, we know that $\Psi_{p+s}^{(p)}(x)\neq 0$. Thus, the three-term recurrence relation for this case is
\begin{equation}
	\label{endspec1}
	Q_{p}(x)=\frac{1}{\Psi_{p+s}^{(p)}(x)}Q_{p+s}(x)+0\cdot Q_{p-t}(x).
\end{equation}

(II) The condition that $\psi_{p-t}^{(p-1)}(x)=0$ and $\Psi_{p+s}^{(p-1)}(x)\neq 0$. Under this condition, (\ref{downexpre}) is changed into (\ref{downexpres1}). Similarly, Lemma \ref{wtpro} determines that $\psi_{p-t}^{(p)}(x)\neq 0$. The recursive formula is therefore
\begin{equation}
	\label{endspec2}
	Q_{p}(x)=\frac{1}{\psi_{p-t}^{(p)}(x)}Q_{p-t}(x)+0\cdot Q_{p+s}(x).
\end{equation}

(III) The condition that $\mathrm{det}(\mathbf{K})=0$ while both $\Psi_{p+s}^{(p-1)}(x)\neq 0$ and $\psi_{p-t}^{(p-1)}(x)\neq 0$. Under this condition,
there are two opposite cases. 

(III-A) Both $\Psi_{p+s}^{(p)}(x)=0$ and $\psi_{p-t}^{(p)}(x)=0$. In this case, (\ref{upexpre}) and (\ref{downexpre}) become respectively
\begin{equation}
	\label{upexpresp3}
	Q_{p+s}(x)=\Psi_{p+s}^{(p-1)}(x)Q_{p-1}(x),
\end{equation} 
and
\begin{equation}
	\label{downexpresp3}
	Q_{p-t}(x)=\psi_{p-t}^{(p-1)}(x)Q_{p-1}(x).
\end{equation}
Thus, we can get $Q_{p-1}(x)$ from either $Q_{p+s}(x)$ or $Q_{p-t}(x)$. For example, we use $Q_{p+s}(x)$ to compute
\begin{equation}
	\label{Qnm1}
	Q_{p-1}(x)=\frac{Q_{p+s}(x)}{\Psi_{p+s}^{(p-1)}(x)}.
\end{equation}
With the help of (\ref{downexpre}), and either $Q_{0}(x)$ or $Q_{1}(x)$, we are able to obtain $Q_{p}(x)$ through
\begin{equation}
	\label{Q_nspec1}
	Q_{p}(x)=\frac{Q_{0}(x)-\psi_{0}^{(p-1)}(x)Q_{p-1}(x)}{\psi_{0}^{(p)}(x)}.
\end{equation}

When $Q_{p+s}(x)\neq 0$, it follows from (\ref{downexpresp3}) that $Q_{p-t}(x)\neq 0$. According to (\ref{Q_nspec1}) as well as (\ref{downexpresp3}), the three-term recurrence relation is
\begin{equation}
	\label{Qn_rec3a}
	Q_{p}(x)=\tilde{L_{1}}(x)Q_{p+s}(x)+\tilde{L_{2}}(x)Q_{p-t}(x),
\end{equation}
where
\begin{equation}
	\label{tildeL1}
	\tilde{L_{1}}(x)=\frac{Q_{0}(x)}{\psi_{0}^{(p)}(x)Q_{p+s}(x)},
\end{equation}
and
\begin{equation}
	\label{tildeL2}
	\tilde{L_{2}}(x)=-\frac{\psi_{0}^{(p-1)}(x)}{\psi_{0}^{(p)}(x)\psi_{p-t}^{(p-1)}(x)}.
\end{equation}

When $Q_{p+s}(x)=0$, it follows that $Q_{p-1}(x)=0$, which leads to $Q_{p-t}(x)=0$ as well. The recursive formula is then
\begin{equation}
	\label{Qn_rec3b}
	Q_{p}(x)=\tilde{l}_{11}Q_{p+s}(x)+\tilde{l}_{12}Q_{p-t}(x)+\tilde{b_{1}}(x),
\end{equation}
where the bias term is
\begin{equation}
	\label{bias3b}
	\tilde{b_{1}}(x)=\frac{Q_{0}(x)-\psi_{0}^{(p-1)}(x)Q_{p-1}(x)}{\psi_{0}^{(p)}(x)},
\end{equation}
and $\tilde{l}_{11}$ and $\tilde{l}_{12}$ are arbitrary real numbers.

(III-B) Both $\Psi_{p+s}^{(p)}(x)\neq 0$ and $\psi_{p-t}^{(p)}\neq 0$. In this case, although both (\ref{upexpre}) and (\ref{downexpre}) are established, yet they are linearly dependent in fact. We use $Q_{p-t}(x)$ and either $Q_{0}(x)$ or $Q_{1}(x)$ to compute $Q_{p-t+1}(x)$ via (\ref{downexpre})
\begin{equation}
	\label{Qnmtm1}
	Q_{p-t+1}(x)=\frac{Q_{0}(x)-\psi_{0}^{(p-t)}(x)Q_{p-t}(x)}{\psi_{0}^{(p-t+1)}(x)}.
\end{equation}
Subsequently, we compute $Q_{p}(x)$ from $Q_{p-t}(x)$ and $Q_{p-t+1}(x)$ based on (\ref{upexpre}):
\begin{equation}
	\label{Qn}
	Q_{p}(x)=\Psi_{p}^{(p-t+1)}(x)Q_{p-t+1}(x)+\Psi_{p}^{(p-t)}(x)Q_{p-t}(x).
\end{equation}

When $Q_{p+s}(x)\neq 0$, the three-term recursive formula is
\begin{equation}
	\label{Qn_rec3c}
	Q_{p}(x)=\Psi_{p}^{(p-t)}(x)Q_{p-t}(x)+\tilde{L_{3}}(x)Q_{p+s}(x),
\end{equation}
where
\begin{equation}
	\label{tildeL3}
	\tilde{L_{3}}(x)=\frac{\Psi_{p}^{(p-t+1)}(x)Q_{p-t+1}(x)}{Q_{p+s}(x)}.
\end{equation}

When $Q_{p+s}(x)=0$, since (\ref{upexpre}) and (\ref{downexpre}) are dependent, we can derive that $Q_{p-t}(x)=0$, which reduces (\ref{Qn}) to
\begin{equation}
	\label{Qnspec}
	Q_{p}(x)=\Psi_{p}^{(p-t+1)}(x)Q_{p-t+1}(x).
\end{equation}
The recurrence relation is therefore
\begin{equation}
	\label{Qn_rec3d}
	Q_{p}(x)=\tilde{l}_{21}Q_{p+s}(x)+\tilde{l}_{22}Q_{p-t}(x)+\tilde{b_{2}}(x),
\end{equation}
where the bias for this case is
\begin{equation}
	\label{bias3c}
	\tilde{b_{2}}(x)=\Psi_{p}^{(p-t+1)}(x)Q_{p-t+1}(x),
\end{equation}
and $\tilde{l}_{21}$ and $\tilde{l}_{22}$ are arbitrary real numbers.

\subsection{Remarks}
We give some remarks for the above analysis and the obtained recurrence relations.

Firstly, some classes of orthogonal polynomials possess the intrinsic parameters, such as Gegenbauer and Jacobi polynomials. We represent the intrinsic parameters by $\mathbf{\Theta}$. Consequently, both the polynomials and the coefficients are substantially with respect to both $x$ and $\mathbf{\Theta}$. However, we omit $\mathbf{\Theta}$ in the mathematical presentations for conciseness and efficiency.

Secondly, the standard three-term recurrence relation formulated in (\ref{Favard}) is  a special case of (\ref{abi_rec}), corresponding to  $(s=1,t=1)$. Furthermore, $s=t$ supports consecutive recursive computation; $s\neq t$, however, fails to proceed continuous computation.  

Thirdly, the parameter setting $s\neq t$ probably produces the ``bias terms'', resulting in non-rigid three-term recurrence relations. Nevertheless, they are reasonable especially when $(p, s, t)$ are freely selected. Taking Legendre polynomial as an example, the polynomials of odd degrees have zero values at $x_{0}=0$, while those of even degrees have non-zero values at that point. When $(p=5,s=3,t=2)$ and the computation of degree increase is required, it is impossible to find a rigid three-term relation to compute $L_{8}(x_{0})\neq 0$ from two zero-value polynomials $(L_{5}(x_{0})=0,L_{3}(x_{0})=0)$. Such phenomena occur merely because the random selection of $(p,s,t)$ breaks the continuity of recursive formulas.

Finally, according to Theorem \ref{mainthm}, we can derive the explicit recurrence relations for different families of orthogonal polynomials with algebraic techniques, theoretically. When $(s,t)$ are small, e.g., equal to $2$, $3$ even $4$, such a strategy is feasible (refer to Section \ref{sec_inst2} for $(s=2, t=2)$), although it resorts to brute force which is arduous indeed. However, it should be noted that most special cases that we discussed above, such as divide-by-zero and zero-coefficient, could be partly avoided in this way. When $(s,t)$ are large, the computation must resort to programming, which makes it necessary to discuss the special cases corresponding to different recursive computation directions.

\section{Instantiation for 2-degree steps}
\label{sec_inst2}
% In the condition that efficient computation and economic storage are critical considerations, the recurrence relations with multi-degree step become preferable, because it is unnecessary to compute the complete set of polynomials, which saves not only the computation time but also the storage space. 
Hermite, Gegenbauer and Legendre polynomials are typical representatives of the classical orthogonal polynomials. Moreover, they have broad applications in mathematics, physics and engineering fields. In this section, we derive the explicit recurrence relations with 2-degree steps, i.e., $(s=2, t=2)$, for these three classes of orthogonal polynomials. 
\subsection{Hermite polynomials}
According to Theorem \ref{thm1} and (\ref{herrecur}), Favard's coefficients of Hermite polynomials are 
\begin{equation}
	\label{hercoef}
	\left\{
	\begin{aligned}
	A_{p}&=2\\
	B_{p}&=0\\
	C_{p}&=2p\\
	\end{aligned}
	\right.  \quad p\geq 1.
\end{equation}
Substituting $p+1$ for $p$ in (\ref{herrecur}), we can get
\begin{equation}
	\label{h2}
	H_{p+2}(x)=2xH_{p+1}(x)-2(p+1)H_{p}(x).
\end{equation}
Still substituting (\ref{herrecur}) into (\ref{h2}) yields
\begin{equation}
	\label{h2fin}
		H_{p+2}(x)=\big(4x^{2}-2p-2\big)H_{p}(x)-4pxH_{p-1}(x).
\end{equation}
Therefore, the coefficients for $s=2$ are
\begin{equation}
	\label{hs2coef}
	s=2:
	\left\{
	\begin{aligned}
	\Psi_{p+2}^{(p)}(x)&=4x^{2}-2p-2\\
	 \Psi_{p+2}^{(p-1)}(x)&=-4px\\
	\end{aligned}
	\right..
\end{equation}
Similarly,  it is easy to get the expression for the case that $t=2$.
\begin{equation}
	\label{h2t}
	H_{p-2}(x)=\frac{x}{p-1}H_{p-1}(x)-\frac{1}{2(p-1)}H_{p}(x).
\end{equation}
Obviously,
\begin{equation}
	\label{ht2coef}
	t=2:
	\left\{
	\begin{aligned}
	\psi_{p-2}^{(p)}(x)&=-\frac{1}{2(p-1)}\\
	 \psi_{p-2}^{(p-1)}(x)&=\frac{x}{p-1}\\   
	\end{aligned}
	\right..
\end{equation}
 Substituting (\ref{hs2coef}) and (\ref{ht2coef}) into (\ref{stepcoeff}), we can obtain the recurrence relation with 2-degree steps for Hermite polynomials based on (\ref{abi_rec})
\begin{equation}
	\label{her2recur}
	 H_{p+2}(x)=2(2x^{2}-2p-1)H_{p}(x)-4p(p-1)H_{p-2}(x),\quad p\geq 2
\end{equation}
with $H_{0}(x)=1$, $H_{1}(x)=2x$, $H_{2}(x)=4x^{2}-2$ and $H_{3}(x)=8x^{3}-12x$.

It should be noted that, in our recent publication in the domain of computer science\cite{yang2026}, we obtained the same result as (\ref{her2recur}) in another way. However, it is that discovery regarding Hermite polynomials that gives us the inspiration to investigate the generalized recurrence relations discussed in this paper.

\subsection{Gegenbauer polynomials}
According to Theorem \ref{thm1} and (\ref{gegrecur}), Favard's coefficients of Gegenbauer polynomials are 
\begin{equation}
	\label{gencoef}
	\left\{
	\begin{aligned}
	A_{p}&=\frac{2(p+\lambda)}{p+1}\\
	B_{p}&=0\\
	C_{p}&=\frac{p+2\lambda-1}{p+1}\\
	\end{aligned}
	\right.  \quad p\geq 1.
\end{equation}

First, we discuss the case that $s=2$. According to (\ref{gegrecur}), we have
\begin{equation}
\label{gs2recu}
G_{p+2}^{(\lambda)}(x)=\frac{2(p+\lambda+1)}{p+2}xG_{p+1}^{(\lambda)}(x)-\frac{p+2\lambda}{p+2}G_{p}^{(\lambda)}(x).
\end{equation}
Substituting (\ref{gegrecur}) into (\ref{gs2recu}), we obtain the explicit expression below:
\begin{equation}
	\label{gs2}
	\begin{aligned}
	G_{p+2}^{(\lambda)}(x)=&\frac{4(p+\lambda)(p+\lambda+1)x^{2}-(p+1)(p+2\lambda)}{(p+1)(p+2)}G_{p}^{(\lambda)}(x)\\
	                                                   -&\frac{2(p+\lambda+1)(p+2\lambda-1)}{(p+1)(p+2)}xG_{p-1}^{(\lambda)}(x)
	\end{aligned}.
\end{equation}
Apparently, the coefficients for $s=2$ are
\begin{equation}
	\label{gs2coef}
	s=2:
	\left\{
	\begin{aligned}
	\Psi_{p+2}^{(p)}(x)=&\frac{4(p+\lambda)(p+\lambda+1)x^{2}-(p+1)(p+2\lambda)}{(p+1)(p+2)}\\ \Psi_{p+2}^{(p-1)}(x)=&-\frac{2(p+\lambda+1)(p+2\lambda-1)}{(p+1)(p+2)}x\\
	\end{aligned}
	\right..
\end{equation}

Similarly,  it is also necessary to get the expression for the case that $t=2$. From (\ref{gegrecur}), we can rewrite the expression to
\begin{equation}
	\label{gt1}
	G_{p-1}^{(\lambda)}(x)=\frac{2(p+\lambda)}{p+2\lambda-1}xG_{p}^{(\lambda)}(x)-\frac{p+1}{p+2\lambda-1}G_{p+1}^{(\lambda)}(x).
\end{equation}
Then, substituting $p-1$ for $p$ in (\ref{gt1}) gives the final expression as 
\begin{equation}
\label{gt2}
G_{p-2}^{(\lambda)}(x)=\frac{2(p+\lambda-1)}{p+2\lambda-2}xG_{p-1}^{(\lambda)}(x)-\frac{p}{p+2\lambda-2}G_{p}^{(\lambda)}(x).
\end{equation}
The coefficients are
\begin{equation}
	\label{gt2coef}
	t=2:
	\left\{
	\begin{aligned}
	\psi_{p-2}^{(p)}(x)&=-\frac{p}{p+2\lambda-2}\\
	\psi_{p-2}^{(p-1)}(x)&=\frac{2(p+\lambda-1)}{p+2\lambda-2}x\\   
	\end{aligned}
	\right..
\end{equation}
 
 Substituting (\ref{gs2coef}) and (\ref{gt2coef}) into (\ref{stepcoeff}), we can finally obtain the explicit recurrence relation with 2-degree steps for Gegenbauer polynomials:
\begin{equation}
	\label{geg2recur}
	\begin{aligned}
		G_{p+2}^{(\lambda)}(x)&=K(p,\lambda)\Big(4(p+\lambda-1)(p+\lambda)(p+\lambda+1)x^{2}\\
		                                                 &-p(p+\lambda+1)(p+2\lambda-1)\\
		                                                 &-(p+1)(p+\lambda-1)(p+2\lambda)\Big)G_{p}^{(\lambda)}(x)\\
	                                                     &-K(p,\lambda)(p+\lambda+1)(p+2\lambda-1)(p+2\lambda-2)G_{p-2}^{(\lambda)}(x),\\
	\end{aligned}\quad p\geq 2
\end{equation}
where
\begin{equation}
	\label{genden}
	K(p,\lambda)=\frac{1}{(p+1)(p+2)(p+\lambda-1)}.
\end{equation}
The initial conditions for the recurrence relation are $G_{0}^{(\lambda)}(x)=1$, $G_{1}^{\lambda}(x)=2\lambda x$,  $G_{2}^{(\lambda)}(x)=2\lambda(\lambda+1)x^{2}-\lambda$, and $G_{3}^{(\lambda)}(x)=\frac{4}{3}\lambda(\lambda+1)(\lambda+2)x^{3}-2\lambda(\lambda+1)x$.

\subsection{Legendre polynomials}
According to Theorem \ref{thm1} and (\ref{legrecur}), Favard's coefficients of Legendre polynomials are
\begin{equation}
	\label{legcoef}
	\left\{
	\begin{aligned}
	A_{p}&=\frac{2p+1}{p+1}\\
	B_{p}&=0\\
	C_{p}&=\frac{p}{p+1}\\
	\end{aligned}
	\right.,  \quad p\geq 1.
\end{equation}
Substituting $p+1$ for $p$ in (\ref{legrecur}) yields
\begin{equation}
	\label{l2}
	L_{p+2}(x)=\frac{2p+3}{p+2}xL_{p+1}(x)-\frac{p+1}{p+2}L_{p}(x).
\end{equation}
Then, we simplify (\ref{l2}) with the help of (\ref{legrecur}) and obtain the final expression
\begin{equation}
	\label{l2fin}
	\begin{aligned}
		L_{p+2}(x)&=\left(\frac{(2p+1)(2p+3)}{(p+1)(p+2)}x^{2}-\frac{p+1}{p+2}\right)L_{p}(x)\\
		                      &-\frac{p(2p+3)}{(p+1)(p+2)}xL_{p-1}(x)\\
	\end{aligned}.
\end{equation}
The coefficients corresponding to $s=2$ can be learned from (\ref{l2fin}):
\begin{equation}
	\label{ls2coef}
	s=2:
	\left\{
	\begin{aligned}
	\Psi_{p+2}^{(p)}(x)&=\frac{(2p+1)(2p+3)}{(p+1)(p+2)}x^{2}-\frac{p+1}{p+2}\\
	\Psi_{p+2}^{(p-1)}(x)&=-\frac{p(2p+3)}{(p+1)(p+2)}x\\
	\end{aligned}
	\right..
\end{equation}

Subsequently, we derive the expression for the case that $t=2$. According to (\ref{legrecur}), it is easy to obtain
\begin{equation}
	\label{l2t}
	L_{p-2}(x)=\frac{2p-1}{p-1}xL_{p-1}(x)-\frac{p}{p-1}L_{p}(x).
\end{equation}
Obviously, the coefficients are
\begin{equation}
	\label{lt2coef}
	t=2:
	\left\{
	\begin{aligned}
	\psi_{p-2}^{(p)}(x)&=-\frac{p}{p-1}\\
	\psi_{p-2}^{(p-1)}(x)&=\frac{2p-1}{p-1}x\\   
	\end{aligned}
	\right..
\end{equation}

Substituting (\ref{ls2coef}) and (\ref{lt2coef}) into (\ref{stepcoeff}), we can finally obtain the explicit recurrence relation with 2-degree steps for Legendre polynomials:  
\begin{equation}
	\label{leg2recur}
	\begin{aligned}
		L_{p+2}(x)&=\frac{(2p-1)(2p+1)(2p+3)x^{2}-(2p-1)(p+1)^{2}-p^{2}(2p+3)}{(2p-1)(p+1)(p+2)}L_{p}(x)\\
		                      &-\frac{p(p-1)(2p+3)}{(2p-1)(p+1)(p+2)}L_{p-2}(x), \quad\quad p\geq 2
	\end{aligned} 
\end{equation}
with the initial conditions $L_{0}(x)=1$, $L_{1}(x)=x$, $L_{2}(x)=\frac{1}{2}(3x^{2}-1)$ and $L_{3}(x)=\frac{1}{2}(5x^{3}-3x)$.

\section{Precision comparison}
\label{sec_exp}
This section gives some experiments to evaluate the computational precision of the proposed recurrence relations against the standard ones. We define the error $\Delta$ to evaluate the computational precision as
\begin{equation}
	\label{relerr}
	\Delta=\left\{
	\begin{aligned}
	&\left|\frac{Q_{p}^{(s)}(x)-Q_{p}^{(t)}(x)}{Q_{p}^{(t)}(x)}\right|\times 100\%,\; &\, \mathrm{if}\;Q_{p}^{(t)}(x)\neq 0\\
	&\left|Q_{p}^{(s)}(x)-Q_{p}^{(t)}(x)\right|\times 100\%,  &\, \mathrm{if}\;Q_{p}^{(t)}(x)=0\\
	\end{aligned}
	\right.,
\end{equation}
where $Q_{p}^{(t)}(x)$ and $Q_{p}^{(s)}(x)$ denote the polynomials of degree $p$ computed by the standard recurrence relations and the proposed ones, respectively. Hermite, Gegenbauer and Legendre polynomials were also chosen for evaluation. All the code is implemented in MATLAB (version R2024a). 

\subsection{The case $s=t$}
\label{precomp1}
When $s=t$, the proposed recurrence relation (\ref{abi_rec}) is capable of computing the polynomials consecutively. We compute three classes of orthogonal polynomials from $0$ up to the maximum degree of $60$ with different $(s, t)$.

For Hermite polynomials, we set $(s=3, t=3)$ for the proposed recurrence relation (\ref{abi_rec}), while the standard one refers to (\ref{herrecur}). $x$ varies from $-1.5$ to $1.5$ with a step size of $0.01$. Fig. \ref{hercomp} shows the polynomial of degree $21$ (relative low) and the one of degree $57$ (relative high) computed from both recurrence relations. Different line widths are intentionally utilized to avoid entire overlapping of the curves. The errors $\Delta$ are both below $2.0\times10^{-11}\%$ for the polynomials of low and high degrees. 
\begin{figure}[htbp]
\centering
\includegraphics[width=14.0cm]{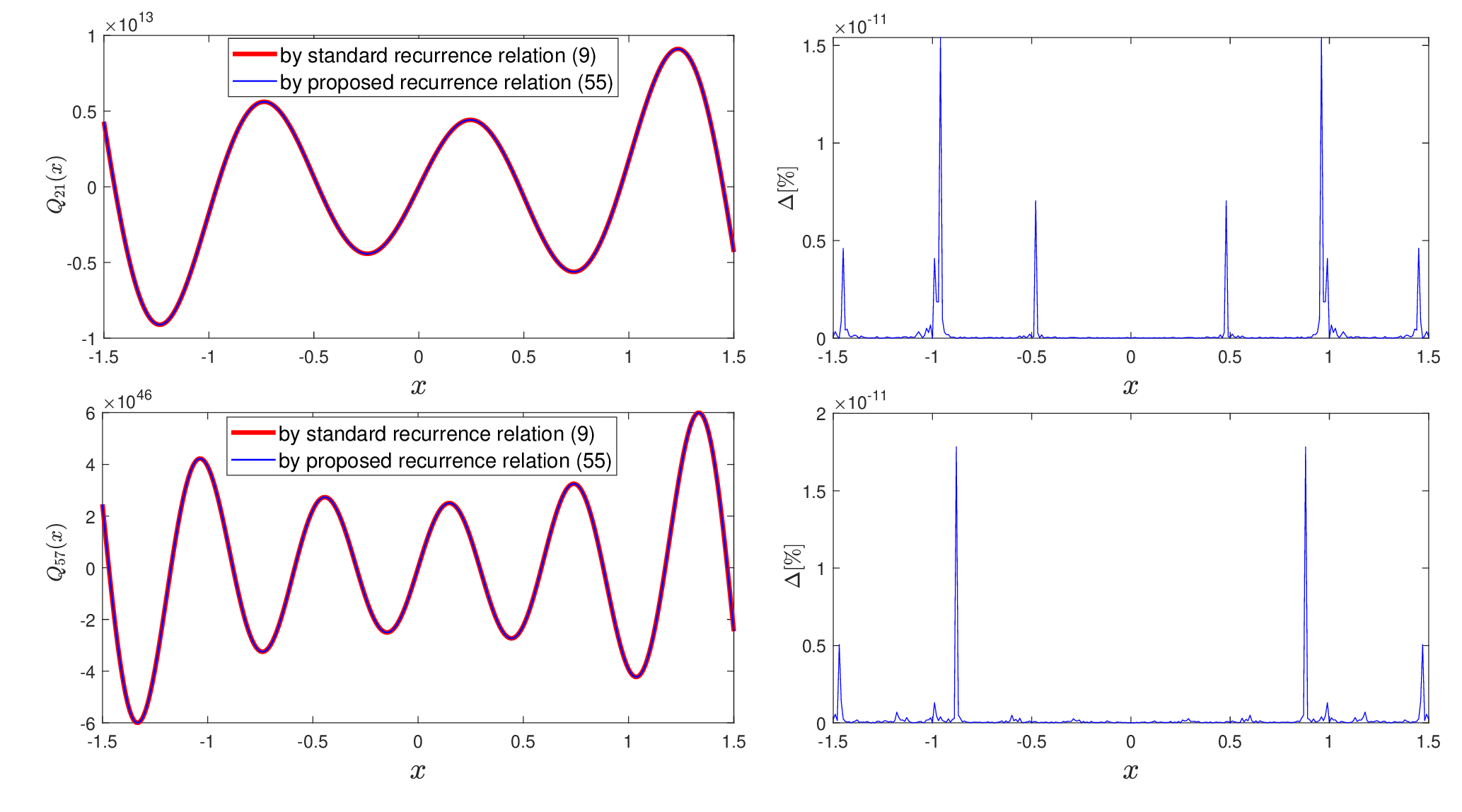}
\caption{Precision comparison with Hermite polynomials. Top left: the polynomials of degree $21$ computed from (\ref{abi_rec}) and (\ref{herrecur}); Top right: the error $\Delta$ for the polynomial of degree $21$; Bottom left: the ones of degree $57$ computed from both recurrence relations; Bottom right: $\Delta$ for the polynomial of degree  $57$.}
\label{hercomp}
\end{figure}

In the computation of Gegenbauer polynomials, $(s=4, t=4)$ are set for the proposed recurrence relation (\ref{abi_rec}). The standard one is formulated in (\ref{gegrecur}). $\lambda$ is set to $3.0$ and $x$ is limited between $-0.9$ and $0.9$ with an increment of $0.01$ to avoid large values. Fig. \ref{gegencomp} shows the polynomial of degree $16$ and the one of degree $56$ computed from both recurrence relations. Obviously, the errors $\Delta$ are both below $4.0\times10^{-10}\%$ for the polynomials of low and high degrees. 
\begin{figure}[htbp]
	\centering
	\includegraphics[width=14.0cm]{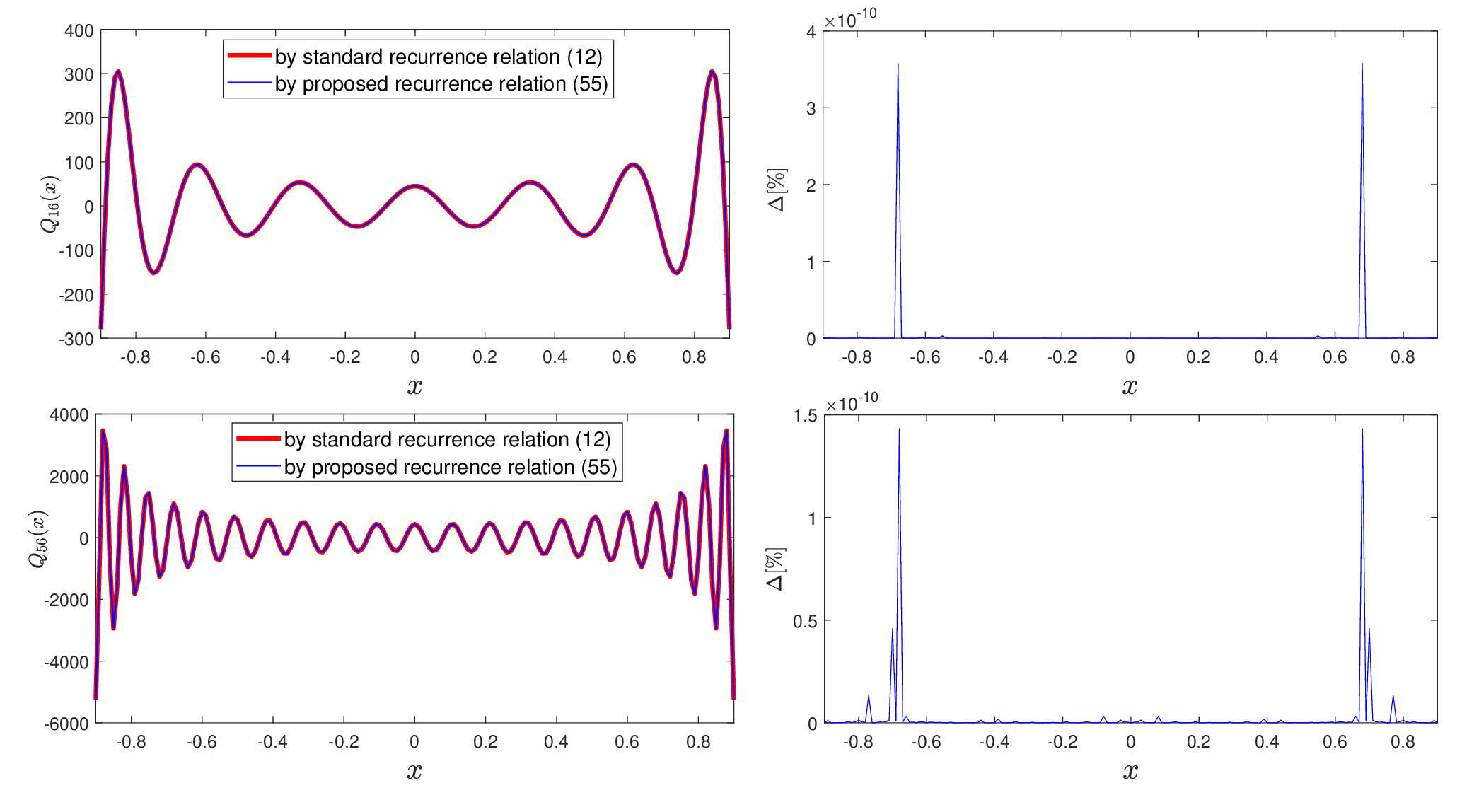}
	\caption{Precision comparison with Gegenbauer polynomials ($\lambda=3.0$). Top left: the polynomials of degree $16$ computed from (\ref{abi_rec}) and (\ref{gegrecur}); Top right: the error $\Delta$ for the polynomial of degree $16$; Bottom left: the ones of degree $56$ computed from both recurrence relations; Bottom right: $\Delta$ for the polynomial of degree $56$.}
	\label{gegencomp}
\end{figure}

In case of Legendre polynomials, we set $(s=5, t=5)$ for the proposed recurrence relation (\ref{abi_rec}). The standard one is denoted as (\ref{legrecur}). $x$ ranges from $-1.0$ to $1.0$ with a stride of $0.01$.  Fig. \ref{legencomp} shows the polynomial of degree $15$ and that of degree $60$ calculated from both recurrence relations. The errors $\Delta$ are below $1.0\times10^{-10}\%$, apparently. 
\begin{figure}[htbp]
	\centering
	\includegraphics[width=14.0cm]{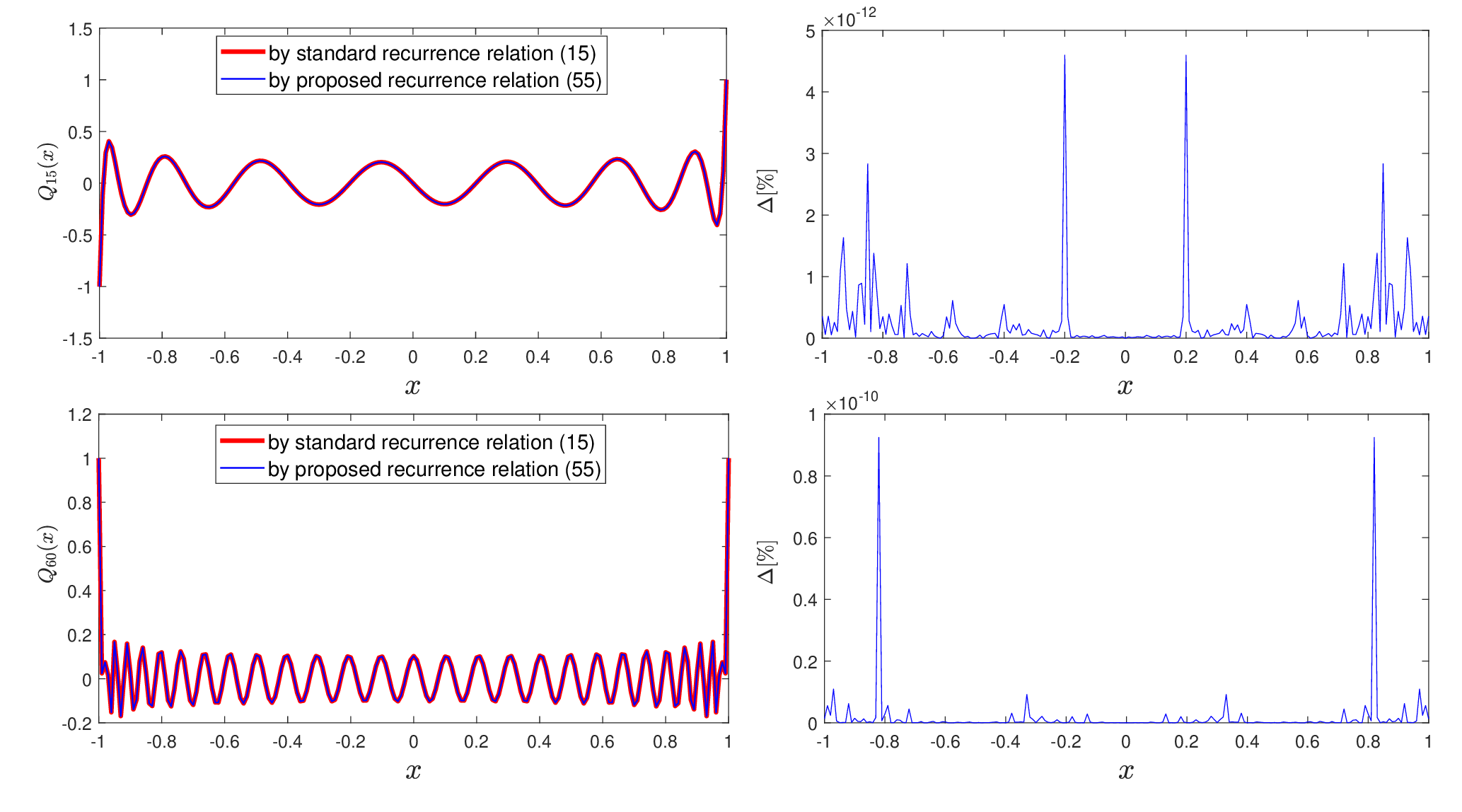}
	\caption{Precision comparison with Legendre polynomials. Top left: the polynomials of degree $15$ computed from (\ref{abi_rec}) and (\ref{legrecur}); Top right: the error $\Delta$ for the polynomial of degree $15$; Bottom left: the ones of degree $60$ computed from both recurrence relations; Bottom right: $\Delta$ for the polynomial of degree $60$.}
	\label{legencomp}
\end{figure}

\subsection{The case $s \neq t$}
\label{precomp2}
In this subsection, we test the proposed recurrence relations for different computational directions. The condition $s\neq t$ confines that the computation can be performed only once. The orthogonal polynomials, the variable $x$ and the intrinsic parameter $\lambda$ are identical to those in Subsection \ref{precomp1}. 

Parameters $(p=17,s=25,t=14)$ and $(p=31,s=32,t=11)$ are set to test (\ref{abi_rec}) for Hermite polynomials. They substantially imply that computing $H_{42}(x)$ from $H_{17}(x)$ and $H_{3}(x)$, and calculating $H_{63}(x)$ based on $H_{31}(x)$ and $H_{20}(x)$, respectively. Fig. \ref{hercomp2} shows the polynomial of degree $42$ and the one of degree $63$ computed from (\ref{abi_rec}) and (\ref{herrecur}). 
\begin{figure}[htbp]
	\centering
	\includegraphics[width=14.0cm]{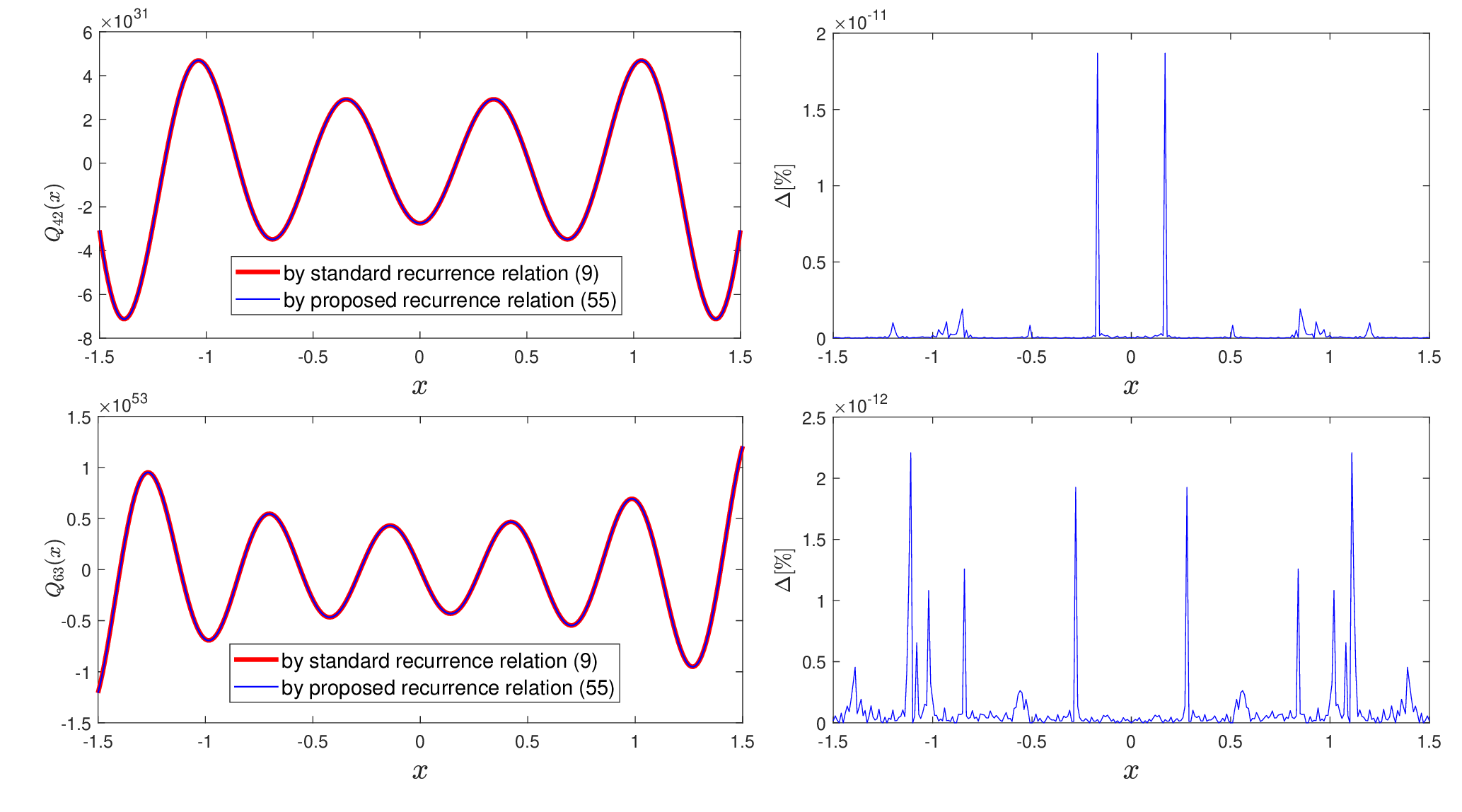}
	\caption{Precision comparison with Hermite polynomials. Top left: the polynomials of degree $42$ computed from (\ref{abi_rec}) and (\ref{herrecur}); Top right: the error $\Delta$ for the polynomial of degree $42$; Bottom left: the ones of degree $63$ computed from both recurrence relations; Bottom right: $\Delta$ for the polynomial of degree $63$.}
	\label{hercomp2}
\end{figure}

Gegenbauer polynomials  ($\lambda=3.0$) are used to test (\ref{dereduc}). The parameters are set to $(p=26,s=15,t=26)$ and $(p=40,s=18,t=29)$. They correspond to computing $G_{0}^{(\lambda)}(x)$ from $G_{41}^{(\lambda)}(x)$ and $G_{26}^{(\lambda)}(x)$, and deriving $G_{11}^{(\lambda)}(x)$ from $G_{58}^{(\lambda)}(x)$ and $G_{40}^{(\lambda)}(x)$. Fig. \ref{gegencomp2} shows the polynomial of degree $0$ and the one of degree $11$ computed via (\ref{dereduc}) and (\ref{gegrecur}).
\begin{figure}[htbp]
	\centering
	\includegraphics[width=14.0cm]{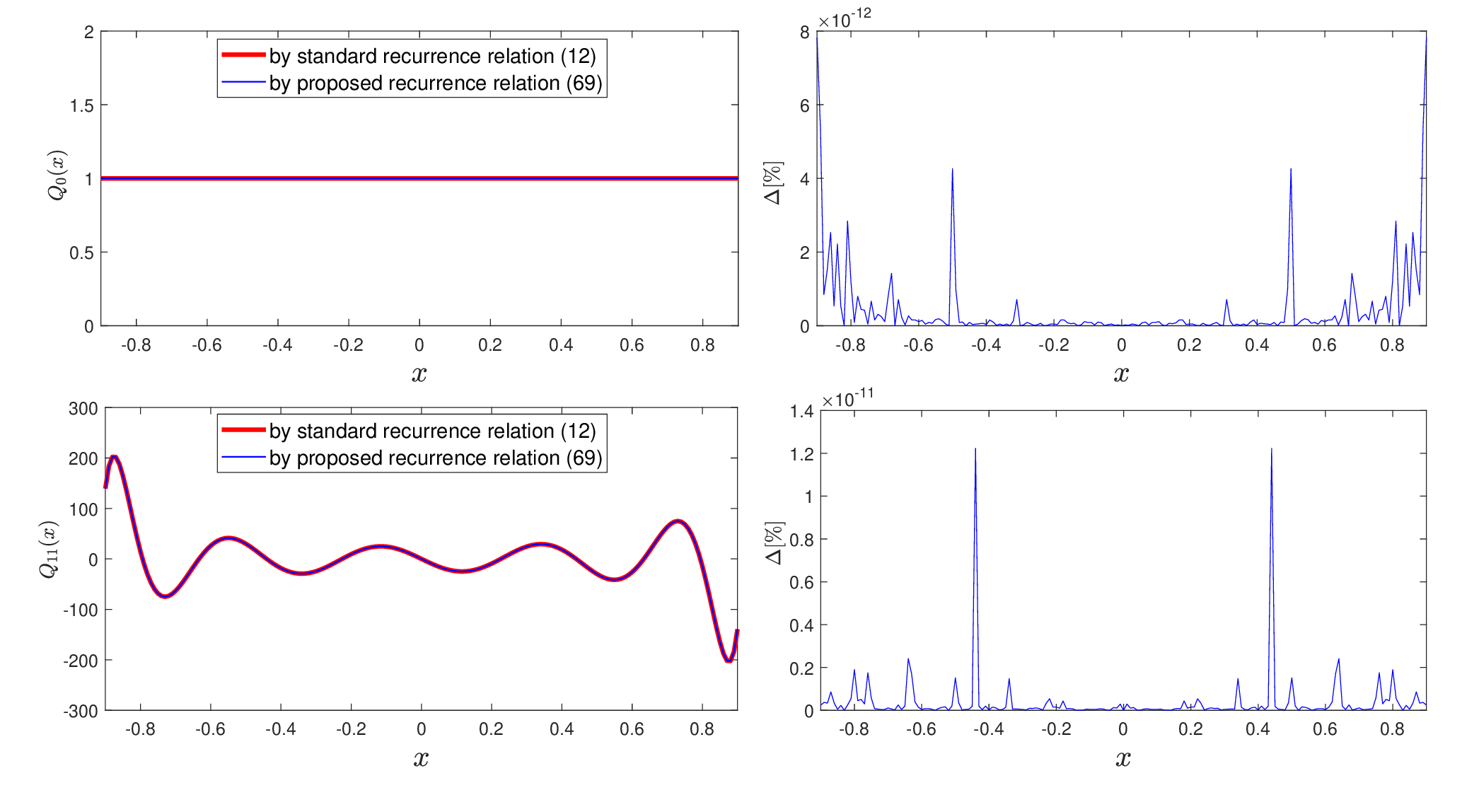}
	\caption{Precision comparison with Gegenbauer polynomials. Top left: the polynomials of degree $0$ computed from the (\ref{dereduc}) and (\ref{gegrecur}); Top right: the error $\Delta$ for the polynomial of degree $0$; Bottom left: the ones of degree $11$ computed from both recurrence relations; Bottom right: $\Delta$ for the polynomial of degree $11$.}
	\label{gegencomp2}
\end{figure}

Legendre polynomials are employed to test (\ref{endmidrec}). The parameters are set to $(p=20,s=31,t=11)$ and $(p=31,s=31,t=18)$. These two cases correspond to computing $L_{20}(x)$ from $L_{51}(x)$ and $L_{9}(x)$, and calculating $L_{31}(x)$ from $L_{62}(x)$ and $L_{13}(x)$. Fig. \ref{legencomp2} shows the polynomial of degree $20$ and the one of degree $31$ from (\ref{endmidrec}) and (\ref{legrecur}).
\begin{figure}[htbp]
	\centering
	\includegraphics[width=14.0cm]{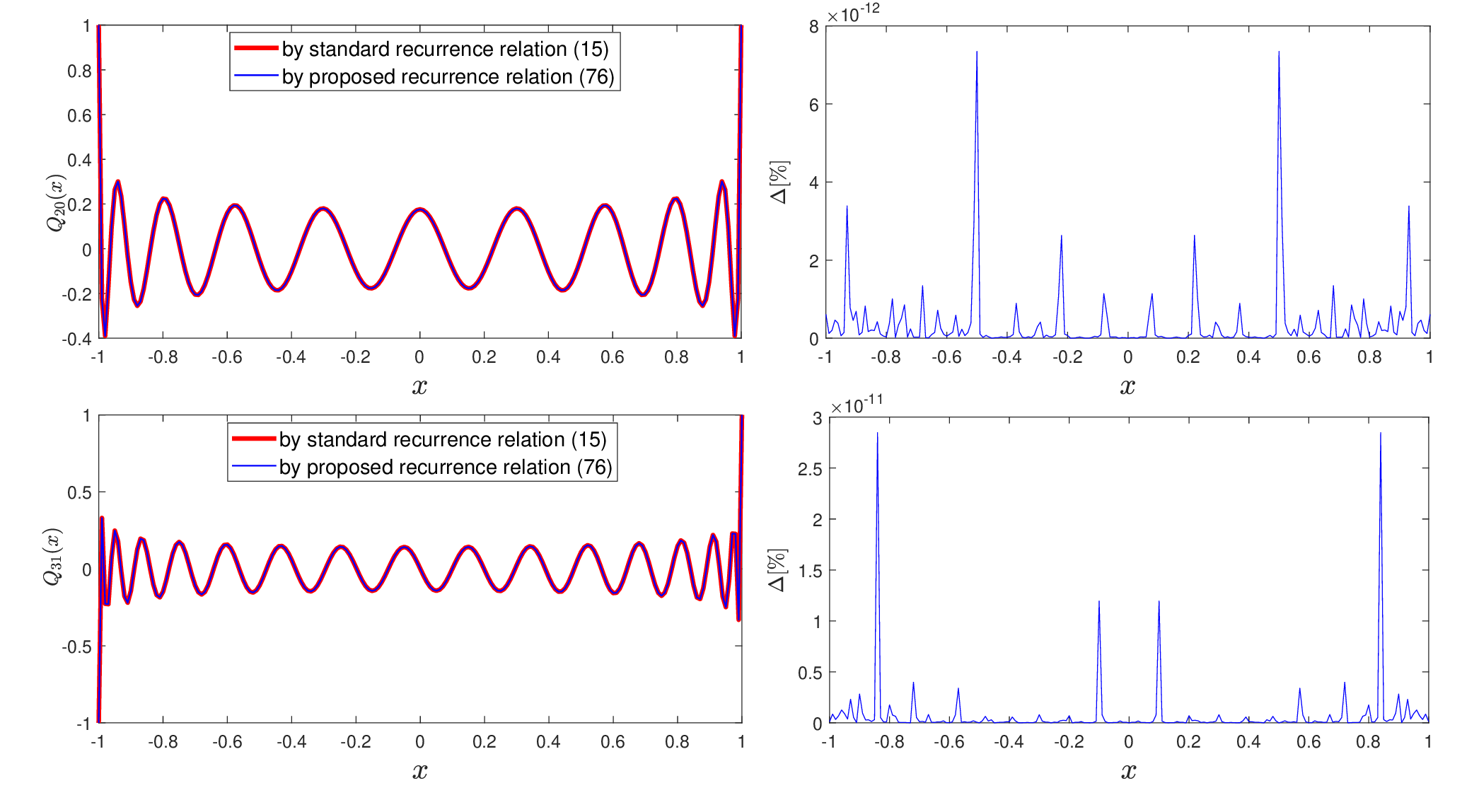}
	\caption{Precision comparison with Legendre polynomials. Top left: the polynomials of degree $20$ computed from (\ref{endmidrec}) and (\ref{legrecur}); Top right: the error $\Delta$ for the polynomial of degree $20$; Bottom left: the ones of degree $31$ computed from both recurrence relations; Bottom right: $\Delta$ for the polynomial of degree $31$.}
	\label{legencomp2}
\end{figure}

All the experimental results show the following facts. First, the proposed recurrence relations (\ref{abi_rec}), (\ref{dereduc}) and (\ref{endmidrec}) are effective for the orthogonal polynomials defined by Favard's theorem. Second, the proposed recurrence relations controlled by $(p, s, t)$ are characterized with degree-skip and direction-adjust capabilities. Last, the proposed recurrence relations almost achieve the same computation precision as the standard recursive relations, which compute the polynomial of any degree strictly based on two adjacent members via rigid 1-degree steps. 

\section{Acknowledgment}
\label{sec:acks}
During the preparation of this article, Bo Yang was partially supported by National Natural Science Foundation of China (grant no. 61502389) and Natural Science Foundation of Shannxi Province (grant no. 2020JM-140).

\bibliography{Recurrence}
\bibliographystyle{elsarticle-num}

\end{document}